\tikzstyle{vertex}=[circle, draw, inner sep=0pt, minimum size=6pt]
\newtheorem{defin}{Definition}[section]
\newtheorem{theorem}[defin]{Theorem}
\newtheorem{remark}[defin]{Remark}
\newtheorem{observation}[defin]{Observation}
\newtheorem{corollary}[defin]{Corollary}
\newtheorem{lemma}[defin]{Lemma}
\newtheorem{example}[defin]{Example}
\DeclareMathOperator{\rank}{rank}
\title{Complete multipartite graphs that are determined, up to switching, by their Seidel spectrum\thanks{The work was supported by the joint ISF-NSFC joint scientific research program, jointly
funded
by the National Natural Science Foundation of China and the Israel Science Foundation (grant nos. 11561141001,  \ 2219/15), and by the National Natural Science Foundation of
China
grant  no. 11531001.}}
\author{Abraham Berman\thanks{Technion-Israel Institute of Technology, Haifa 32000, Israel. Email: \texttt{berman@technion.ac.il}}\\ Naomi Shaked-Monderer\thanks{The Max Stern Yezreel Valley College, Yezreel Valley 19300, Israel. Email: \texttt{nomi@technion.ac.il}}\\ Ranveer Singh\thanks{ Technion-Israel Institute of Technology, Haifa 32000, Israel. Email: \texttt{ranveer@iitj.ac.in}}\\
  Xiao-Dong Zhang \thanks{Shanghai Jiao Tong University,
800 Dongchuan road, Shanghai, 200240,
P.R. China.\ \ Email: \texttt{xiaodong@sjtu.edu.cn} (Corresponding author)}
}
\date{\today}
\begin{document}
\maketitle
\begin{abstract}
\noindent

It is known that complete multipartite graphs are determined by their distance spectrum but not by their adjacency spectrum.
The Seidel spectrum of a graph $G$ on more than one vertex does not determine the graph, since any graph obtained from $G$ by Seidel switching has the same Seidel spectrum. We consider $G$ to be determined by its Seidel spectrum, up to switching, if any graph with the same spectrum is switching equivalent to a graph isomorphic to $G$.
It is shown that any graph which has the same spectrum as a complete $k$-partite graph is switching equivalent to a complete $k$-partite graph, and if the different partition sets sizes are $p_1,\ldots, p_l$, and there are at least three partition sets of each size $p_i$, $i=1,\ldots, l$, then $G$ is determined, up to switching, by its Seidel spectrum.  Sufficient conditions for a complete tripartite graph to be determined by its Seidel spectrum are discussed, and a conjecture is made on complete tripartite graphs on more than 18 vertices.

\medskip

\noindent
\textbf{Keywords:} Complete multipartite graphs, Seidel matrix, Seidel switching, $S$-determined
\smallskip
\noindent
\textbf{Mathematical Subject Classification 2010:} 05C50, 05C12
\end{abstract}

\section{Introduction}
The graphs in this paper are simple. Let $V(G)$ denote the vertex set of a graph $G$ and let $E(G)$ denote its edge set. If two distinct vertices $v_i, v_j \in V(G)$ are adjacent in $G$, we write $v_i\sim v_j$, otherwise, $v_i \not \sim v_j$. The \emph{adjacency matrix}, $A(G)=(a_{ij})$, of a graph $G$ with $V(G)=\{v_1,\ldots,v_n\}$ is an $n\times n$ $(0,1)$ symmetric matrix in which $a_{ij}=1$ if and only if $v_i \sim v_j$. The \emph{spectrum} of $G$  is the multiset of the eigenvalues of $A(G)$.  Several parameters of a graph can be deduced from the spectrum of $A(G)$. For example,   $\text{trace}(A(G)^k)=\sum_{i=1}^n\lambda_i^k$ is the number of all closed walks, of length $k$ in $G$, so in particular, $|E(G)|=\frac{1}{2}\text{trace}(A(G)^2)$ and the number of triangles in $G$ is $\frac{1}{6}\text{trace}(A(G)^3)$. Thus it is interesting to know which graphs are determined (up to isomorphism) by their spectrum, that is, graphs for which there exist no   non-isomorphic graph with the same spectrum. Two non-isomorphic graphs $G$ and $H$ are \emph{cospectral} if they have the same spectrum. A classical example of non-isomorphic cospectral graphs is given in Figure \ref{nnn} \cite{cvetkovic1971graphs}; their common spectrum is $\{[2]^1, [0]^3, [-2]^1\}$ (exponents indicate multiplicities). The complete graph $K_n$ and the path graph $P_n$ are determined by their spectrum.

Let $V(G)=\{v_1,\ldots,v_n\}$. Then $D(G)=(d_{ij})$ is the diagonal matrix with $d_{ii}$ the degree of $v_i$. Let $I$ denote the identity matrix and $J$ the all-ones matrix. A linear combination of $A(G), D(G), J$ and $I$ is called a \emph{generalised adjacency matrix}. There are many results on the spectra of generalised adjacency matrices, see the excellent surveys \cite{van2003graphs,haemers2004enumeration,vanDamHaemers2009}.  Generalised adjacency matrices include the {\it Laplacian}, $L(G)=D(G)-A(G)$, the {\it signless Laplacian}, $Q(G)=D(G)+A(G)$, and the {\it Seidel matrix} $S(G)=J-I-2A(G)$. Note that  the Seidel matrix $S(G)= (s_{ij})$ of $G$ is the square matrix of order $n$ defined by
\[s_{ij}=\begin{cases}
0 & \mbox{if $v_i=v_j$},\\
1 & \mbox{if $v_i\not \sim v_j$, $v_i\neq v_j$},\\
-1 & \mbox{if $v_i\sim v_j$, $v_i\neq v_j$.}
\end{cases}\]  Other matrices for which the spectrum is of interest are the \emph{distance matrix}, where the $(i,j)$ entry is the distance between $v_i$ and $v_j$, and the
 {\it normalized Laplacian}, $\mathcal{L}(G)=D(G)^{-\frac{1}{2}}L(G)D(G)^{-\frac{1}{2}}$. Let $X\in \{$generalised adjacency, Laplacian, signless Laplacian, normalised Laplacian, distance, Seidel$\}$. The \emph{$X$ spectrum} of $G$ is the spectrum of the $X$ matrix of $G,$ and the references mentioned above contain many results on finding non-isomorphic $X$ cospectral graphs (i.e., non-isomorphic graphs that have the same $X$ spectrum) or showing that a graph is determined by its $X$ spectrum. In addition, some graphs that are determined by the normalized Laplacian spectrum are given in \cite{butler2016cospectral,berman2018family}, and the references there. Our paper is a small contribution to the rich literature on graphs that are determined by their $X$ spectrum. This is done by considering the Seidel spectrum of complete multipartite graphs. We mention in passing, that complete multipartite graphs are determined by the spectrum of the distance matrix but not by the spectrum of the adjacency matrix \cite{Delorme2012,jin2014complete}.

 Let $U,W\subseteq V(G)$ form a partition of $V(G)$. A \emph{Seidel switching} with respect to $U$ transforms $G$ to a graph $H$ by deleting the edges between $U$ and $W$ and adding an edge between vertices $u\in U$ and $w\in W$ if $(u,w)\notin E(G)$. For more details on Seidel matrices and related topics, see \cite{van1991equilateral,sciriha2018two,szollHosi2018enumeration,balla2018equiangular} and the references there. Seidel switching is an equivalence relation and we say that $G$ and $H$ are \emph{switching equivalent}. In general, $G$ and $H$ are not isomorphic, but since $S(H)=\Lambda S(G) \Lambda$, where $\Lambda$ is a signature matrix (a diagonal matrix with $1$  entries corresponding to vertices of $U$ and $-1$  entries corresponding to vertices of $W$), $S(H)$ and $S(G)$ are similar and have the same spectrum. Hence such $G$ and $H$ are cospectral, so no graph with more than one vertex is determined by its Seidel spectrum.  Hence we say that a graph $G$ is \emph{Seidel determined, up to switching,} (or, in short, {\it $S$-determined}) if the only graphs with same Seidel spectrum are switching equivalent to a graph isomorphic to $G$.

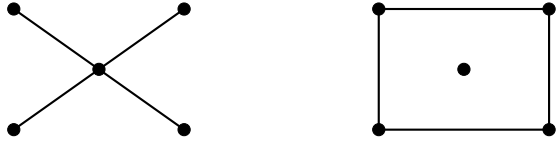
\begin{figure}[!ht]
\begin{center}
\begin{tikzpicture}[scale=0.8]

\draw[thick] (0,2)--(2.8,0);
\draw[thick] (0,0)--(2.8,2);

\draw[fill](0,0) circle[radius=0.1];
\draw[fill](0,2) circle[radius=0.1];
\draw[fill](2.8,0) circle[radius=0.1];
\draw[fill](2.8,2) circle[radius=0.1];
\draw[fill](1.4,1) circle[radius=0.1];

\draw[thick] (6,0)--(6,2)--(8.8,2)--(8.8,0)--cycle;

\draw[fill](6,0) circle[radius=0.1];
\draw[fill](6,2) circle[radius=0.1];
\draw[fill](8.8,0) circle[radius=0.1];
\draw[fill](8.8,2) circle[radius=0.1];
\draw[fill](7.4,1) circle[radius=0.1];

\end{tikzpicture}
\caption{A pair of on-isomorphic cospectral graphs   \label{nnn}}
\end{center}\vspace{-8pt}
\end{figure}

The paper is organized as  follows. In Section \ref{no&pri} we discuss some notations and preliminaries used in the main results. In Section \ref{comultipar} we consider   complete multipartite graphs. We show that each graph  Seidel cospectral with a complete $k$-partite graph is switching equivalent to a complete $k$-partite graph, and describe  cases when   complete multipartite graphs are $S$-determined. Complete tripartite graphs are considered in Section \ref{comtripar}, where we present triples $p,q,r$ for which $K_{p,q,r}$ is $S$-determined and examples of non-isomorphic, non switching equivalent, Seidel cospectral complete tripartite graphs. We conclude with a conjecture on complete tripartite graphs on more than 18 vertices.

\section{Notations and preliminaries}\label{no&pri}

We begin with presenting the Seidel spectrum of some graphs.

\begin{lemma}\label{spectra1}
\begin{enumerate}
\item[{\rm(a)}] The  Seidel spectrum of the empty graph on $n$ vertices is $\{[n-1]^1,[-1]^{n-1} \}$.
\item[{\rm(b)}] The Seidel spectrum of the complete graph $K_n$ is $\{[1]^{n-1}, [1-n]^1 \}$.
\item[{\rm(c)}] The Seidel spectrum of the complete bipartite graph $K_{p,q}$ is $\{[p+q-1]^1,[-1]^{p+q-1}\} $.
\end{enumerate}
\end{lemma}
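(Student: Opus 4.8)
The plan is to reduce all three computations to the spectrum of the all-ones matrix $J$ of order $n$, which has eigenvalue $n$ with multiplicity $1$ (with eigenvector $\1$) and eigenvalue $0$ with multiplicity $n-1$. Everything then follows by writing the Seidel matrix explicitly in terms of $J$ and $I$.

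For part (a), if $G$ is the empty graph on $n$ vertices then $A(G)=0$, so $S(G)=J-I$; its eigenvalues are those of $J$ shifted by $-1$, that is, $n-1$ once and $-1$ with multiplicity $n-1$. For part (b), if $G=K_n$ then $A(G)=J-I$, hence $S(G)=J-I-2(J-I)=I-J$, whose eigenvalues are $1$ minus those of $J$, namely $1-n$ once and $1$ with multiplicity $n-1$.

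For part (c) the quickest route is to observe that $K_{p,q}$ is switching equivalent to the empty graph on $n=p+q$ vertices: take $U$ to be one partition set and $W$ the other; the empty graph has no edges between $U$ and $W$, so switching with respect to $U$ inserts exactly all edges joining $U$ to $W$ while leaving $U$ and $W$ independent, which produces $K_{p,q}$. Since switching preserves the Seidel spectrum, part (c) follows from part (a) with $n=p+q$. Equivalently, and without invoking switching, one writes
\[
S(K_{p,q})=\begin{pmatrix} J_p-I_p & -J_{p\times q}\\[2pt] -J_{q\times p} & J_q-I_q\end{pmatrix}
\]
in block form according to the bipartition, and conjugates by the signature matrix $\Lambda=\diag(I_p,-I_q)$ to get $\Lambda S(K_{p,q})\Lambda = J_{p+q}-I_{p+q}$, the Seidel matrix of the empty graph on $p+q$ vertices.

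There is no genuine obstacle in this lemma: each step is an immediate consequence of knowing the spectrum of $J$. The only point that merits an explicit line of verification is the assertion in (c) that Seidel switching of the empty graph along one partition set yields $K_{p,q}$ (equivalently, that the block conjugation displayed above is correct), and this is a routine check.
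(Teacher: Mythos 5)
Your proof is correct and follows essentially the same route as the paper: parts (a) and (b) by writing the Seidel matrix as $J-I$ and $I-J$ respectively, and part (c) by observing that $K_{p,q}$ is obtained from the empty graph on $p+q$ vertices by switching with respect to one partition set. The explicit block conjugation by $\Lambda=\diag(I_p,-I_q)$ is a nice added verification but not a different argument.
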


\begin{proof}
The Seidel matrix of the empty graph on $n$ vertices is the $n\times n$ matrix $J-I$. The Seidel matrix of $K_n$ is $I-J$,  and the graph
 $K_{p,q}$ is   obtained by switching from the empty graph on ${p+q}$ vertices, where $U$ is any subset of $p$ vertices.
\end{proof}

To prove our main theorem we need the  Seidel spectra of some graphs on $5$ vertices with an isolated vertex.

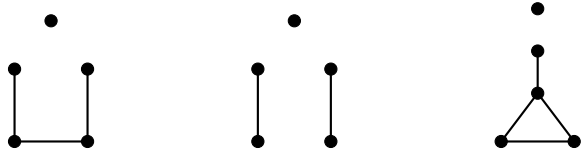
\begin{figure}[!ht]
\begin{center}
\begin{tikzpicture}[scale=0.8]

\draw[thick] (0,1.2)--(0,0)--(1.2,0)--(1.2,1.2);

\draw[fill](0,0) circle[radius=0.1];
\draw[fill](1.2,0) circle[radius=0.1];
\draw[fill](0,1.2) circle[radius=0.1];
\draw[fill](1.2,1.2) circle[radius=0.1];
\draw[fill](.6,2) circle[radius=0.1];

\draw[thick] (4,1.2)--(4,0);
\draw[thick] (5.2,0)--(5.2,1.2);

\draw[fill](4,0) circle[radius=0.1];
\draw[fill](5.2,0) circle[radius=0.1];
\draw[fill](4,1.2) circle[radius=0.1];
\draw[fill](5.2,1.2) circle[radius=0.1];
\draw[fill](4.6,2) circle[radius=0.1];

\draw[thick] (8.6,.8)--(8,0)--(9.2,0)--(8.6,.8)--(8.6,1.5);

\draw[fill](8,0) circle[radius=0.1];
\draw[fill](9.2,0) circle[radius=0.1];
\draw[fill](8.6,.8) circle[radius=0.1];
\draw[fill](8.6,1.5) circle[radius=0.1];
\draw[fill](8.6,2.2) circle[radius=0.1];

%\node[align=center, below] at  (.6,-0.7) ;

\end{tikzpicture}
\caption{$K_1\cup P_4$, $K_1\cup K_2\cup K_2$  and  $K_1\cup H$   \label{fig:paw}}
\end{center}\vspace{-8pt}
\end{figure}

\begin{lemma}\label{spectra2}
\begin{enumerate}
\item[{\rm(a)}] The Seidel spectrum of $K_1\cup P_4$ is $[\sqrt{5}]^2, [0]^1, [-\sqrt{5}]^2$;  $-\sqrt{5}\approx -2.2361$.
\item[{\rm(b)}]  The  Seidel spectrum of $K_1\cup K_2\cup K_2$ is $\left[\frac{1+\sqrt{17}}{2}\right]^1,[1]^2,\left[\frac{1-\sqrt{17}}{2}\right]^1, [-3]^1$;
    $\frac{1-\sqrt{17}}{2}\approx -1.5616$.
\item[{\rm(c)}] The Seidel spectrum of the $K_1\cup H$, where $H$ is the paw graph
{\begin{tikzpicture}[scale=0.4] \draw[thick](.6,.8)--(0,0)--(1.2,0)--(.6,.8)--(.6,1.5);
\draw[fill](0,0) circle[radius=0.1];
\draw[fill](1.2,0) circle[radius=0.1];
\draw[fill](.6,.8) circle[radius=0.1];
\draw[fill](.6,1.5) circle[radius=0.1];
\end{tikzpicture}}, is  \\$\left[\frac{1+\sqrt{17}}{2}\right]^1,[1]^2,\left[\frac{1-\sqrt{17}}{2}\right]^1, [-3]^1$.
\end{enumerate}
\end{lemma}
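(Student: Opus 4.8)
The plan is to read off each Seidel spectrum from the corresponding $5\times5$ Seidel matrix, in each case using whatever structure is available to avoid a blind determinant expansion. Every one of the three graphs is of the form $K_1\cup G'$ with $G'$ on four vertices; since the vertex of the $K_1$ is non-adjacent to all others, its row and column in $S(K_1\cup G')$ are all $+1$ off the diagonal.

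For (b) and (c) I would exploit twin vertices: if $v_i,v_j$ have the same neighbourhood outside $\{v_i,v_j\}$, then $e_i-e_j$ is a Seidel eigenvector with eigenvalue $-s_{ij}$, equal to $1$ when $v_i\sim v_j$. In $K_1\cup K_2\cup K_2$ the two edges give two pairs of adjacent twins, producing the eigenvalue $1$ with multiplicity $2$; on the orthogonal complement of these two difference vectors — the span of $e_1$ and the two sums $e_i+e_j$ — the Seidel matrix is an explicit $3\times3$ matrix whose characteristic polynomial I would compute and check equals $(x+3)(x^2-x-4)$, finishing the spectrum with $-3$ and $\tfrac{1\pm\sqrt{17}}{2}$. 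In $K_1\cup H$ the two triangle-vertices of the paw other than the one bearing the pendant form a pair of adjacent twins, giving the eigenvalue $1$ once; reducing to the $4$-dimensional complement leaves a $4\times4$ matrix whose characteristic polynomial factors as $(x-1)(x+3)(x^2-x-4)$, so the spectrum is $\tfrac{1+\sqrt{17}}{2},1,1,\tfrac{1-\sqrt{17}}{2},-3$, the same as in (b).

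The graph in (a) has no twin vertices, so there I would instead invoke invariance under switching ($S(H)=\Lambda S(G)\Lambda$ for a signature matrix $\Lambda$): switching $K_1\cup P_4$ with respect to the two endpoints of the path turns it into the $5$-cycle $C_5$, whose Seidel matrix $S$ is a circulant with zero row sums satisfying $S^2=5I-J$. Hence $S\1=\0$ and $S^2=5I$ on $\1^\perp$, so the remaining four eigenvalues are $\pm\sqrt5$, and as $\trace S=0$ they split into two of each — giving $\{[\sqrt5]^2,[0]^1,[-\sqrt5]^2\}$. I do not expect any genuine obstacle: the work is routine, and all three parts could alternatively be done by direct cofactor expansion. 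The only points requiring care are choosing the right switching set in (a), identifying the twin pairs in (b) and (c), and not dropping a multiplicity when passing to the quotient matrix.
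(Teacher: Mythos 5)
Your proof is correct, but it organizes the work differently from the paper. The paper simply declares (a) and (b) to be direct computations and then disposes of (c) in one line by observing that $K_1\cup H$ is obtained from $K_1\cup K_2\cup K_2$ by switching with respect to a single end vertex of one of the two edges, so the two spectra coincide by the similarity $S(H)=\Lambda S(G)\Lambda$. You instead put structure into each part: for (a) you switch $K_1\cup P_4$ at the two path endpoints to get $C_5$ and exploit the circulant identity $S^2=5I-J$ together with $S\mathbf{1}=\mathbf{0}$ and $\trace S=0$ (this checks out: the switched graph is indeed the pentagon, and the eigenvalues $0,\pm\sqrt5$ with the stated multiplicities follow); for (b) and (c) you peel off eigenvalue $1$ via adjacent twins (the observation that $e_i-e_j$ is an eigenvector with eigenvalue $-s_{ij}$ is correct) and reduce to a small quotient matrix, whose characteristic polynomials $(x+3)(x^2-x-4)$ and $(x-1)(x+3)(x^2-x-4)$ are as you claim. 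The trade-off: your route for (a) and (b) replaces a brute-force $5\times5$ determinant by conceptually transparent reductions, which is a genuine (if modest) gain; for (c), however, the paper's single-vertex switching argument is shorter than redoing the twin reduction, and you could have adopted it since you already invoke switching invariance for (a). Either way the lemma is established.
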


\begin{proof}
Parts (a) and (b) are found by direct computation. For (c) note that $K_1\cup H$ is obtained from $K_1\cup K_2\cup K_2$ by switching with respect to
$U=\{u\}$, where $u$ is an end vertex of one of the two edges.
\end{proof}

\section{Complete multipartite graphs}\label{comultipar}
For considering general complete multipartite graphs, we need some facts about their Seidel spectrum.
The Seidel spectrum of the complete multipartite graph $K_{p,p,\ldots, p}$ was found
in \cite{LvWeiZhao2012} by computing the characteristic polynomial of the graph. We
include here a different proof.

\begin{lemma}\label{spectra0}
The Seidel spectrum of the graph $K_{p,p,\ldots, p}$ on $n=kp$ vertices is
\[\{[2p-1]^{k-1},[-1]^{n-k}, [-n+2p-1]^1\}.\]
\end{lemma}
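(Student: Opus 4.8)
The plan is to write the Seidel matrix of $K_{p,\ldots,p}$ ($k$ parts) explicitly as a structured matrix and diagonalize it by exploiting the block pattern. Order the vertices part by part. Then $S = J_n - I_n - 2A$, where $A$ is the adjacency matrix of $K_{p,\ldots,p}$. Writing $A$ in $k\times k$ block form with blocks of size $p$, the diagonal blocks are $0_{p\times p}$ and the off-diagonal blocks are $J_p$; equivalently $A = (J_k - I_k)\otimes J_p$. Hence
\[
S \;=\; J_k\otimes J_p \;-\; I_k\otimes I_p \;-\; 2\bigl((J_k-I_k)\otimes J_p\bigr)
\;=\; (2I_k - J_k)\otimes J_p \;-\; I_k\otimes I_p.
\]
So $S = M\otimes J_p - I_n$ with $M := 2I_k - J_k$.

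Next I would diagonalize $M\otimes J_p$. The matrix $J_p$ has eigenvalue $p$ (once, eigenvector $\1$) and $0$ (with multiplicity $p-1$). The matrix $M = 2I_k - J_k$ has eigenvalue $2 - k$ once (eigenvector $\1_k$) and $2$ with multiplicity $k-1$ (the orthogonal complement of $\1_k$). Therefore the eigenvalues of $M\otimes J_p$ are: $p(2-k)$ with multiplicity $1$; $2p$ with multiplicity $k-1$; and $0$ with multiplicity $k(p-1) = n - k$. Subtracting $I_n$ shifts every eigenvalue down by $1$, giving eigenvalues $2p - pk - 1 = 2p - n - 1$ once, $2p - 1$ with multiplicity $k-1$, and $-1$ with multiplicity $n-k$. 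This is exactly the claimed spectrum $\{[2p-1]^{k-1}, [-1]^{n-k}, [-n+2p-1]^1\}$.

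I do not expect any real obstacle here: the argument is a routine Kronecker-product eigenvalue computation once the key observation $S = (2I_k - J_k)\otimes J_p - I_n$ is made, and that identity is immediate from $S = J - I - 2A$ together with $J_n = J_k\otimes J_p$ and $A = (J_k - I_k)\otimes J_p$. The only point that needs a word of care is bookkeeping of multiplicities — checking that they sum to $n$: $1 + (k-1) + (n-k) = n$, which they do. As an optional sanity check one could verify $\trace(S) = 0$: indeed $(2p-1)(k-1) + (-1)(n-k) + (2p-n-1) = 2pk - 2p - k + 1 - n + k + 2p - n - 1 = 2pk - 2n = 0$ since $n = pk$. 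If one prefers to avoid tensor notation entirely, the same eigenvectors can be exhibited directly: $\1_n$ (giving $2p-n-1$); vectors constant on each part with the block-values summing to zero (giving $2p-1$, dimension $k-1$); and vectors supported on a single part with entries summing to zero (giving $-1$, dimension $k(p-1)$). These $n$ vectors are mutually orthogonal, so they form a complete eigenbasis, which finishes the proof.
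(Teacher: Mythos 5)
Your proof is correct and follows essentially the same route as the paper: both write the Seidel matrix as $(2I_k-J_k)\otimes J_p-I_n$ and read off the spectrum from the Kronecker product. The extra details you supply (the derivation of the tensor identity, the trace check, and the explicit eigenvectors) are sound but not needed beyond what the paper already does.
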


\begin{proof}
The  Seidel matrix of  $K_{p,p, \ldots, p}$ is $(2I_k-J_k)\otimes J_p -I_n$, where
$\otimes$ denotes the Kronecker product, and $n=kp$. The spectrum of $2I_k-J_k$ is $\{[2]^{k-1},[2-k]^1\}$, and that of $J_p$ is $\{[p]^1,[0]^{p-1}\}$. The spectrum of
their Kronecker product consists of all the products of these eigenvalues: $\{[2p]^{k-1},[0]^{(p-1)k}, [(2-k)p]^1 \}=\{[2p]^{k-1},[0]^{n-k}, [2p-n]^1 \}$. Thus the Seidel spectrum
of $K_{p,p, \ldots, p}$   is $\{[2p-1]^{k-1},[-1]^{n-k}, [-n+2p-1]^1\}$.
\end{proof}

The next lemma is mostly implicit in \cite[Corollaries 2.2 and 2.3, and eq. (3)]{WangZhaoLi2014}.
We include a partial explanation for better clarity.

\begin{lemma}\label{spectra3}
Let $p_1\ge p_2\ge \ldots\ge p_k\ge 1$. Denote the  Seidel eigenvalues of the complete $k$-partite graph $K_{p_1, p_2, \ldots, p_k}$
by $\lambda_1\ge \lambda_2\ge \ldots \ge \lambda_n$, where $n=p_1+p_2+\ldots+p_k$. Then
\begin{enumerate}
\item[(1)] $\lambda_1\ge \lambda_2\ge \ldots \ge  \lambda_{k-1}>0>\lambda_{k}=-1=\ldots=-1=\lambda_{n-1}\ge \lambda_n$.
\item[(2)] $ 2p_1-1\ge \lambda_1\ge 2p_2-1\ge \lambda_2\ge \ldots\ge 2p_{k-1}-1 \ge\lambda_{k-1}\ge 2p_k-1$,
\item[(3)] $(2-k)p_k-1\ge \lambda_n\ge 2p_k-n-1$.
\item[(4)] For $i=1, \ldots, k-1$, if $p_i>p_{i+1}$, then $2p_i-1>\lambda_i>2p_{i+1}-1$.
\end{enumerate}
\end{lemma}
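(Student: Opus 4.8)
The plan is to express $S(G)$ through the complement $\overline{G}=K_{p_1}\cup\cdots\cup K_{p_k}$. From $A(G)+A(\overline{G})=J-I$ and $A(\overline{G})=\bigoplus_{i=1}^{k}(J_{p_i}-I_{p_i})$ one computes
\[
S(G)+I \;=\; 2\bigoplus_{i=1}^{k}J_{p_i}\;-\;J_n ,\qquad n=\sum_i p_i .
\]
The partition of $V(G)$ into its $k$ parts is equitable for $S(G)$. Every vector $x$ whose coordinate sum over each part is $0$ satisfies $\bigl(2\bigoplus_i J_{p_i}\bigr)x=0$ and $J_nx=0$, hence $S(G)x=-x$; this already gives $-1$ as a Seidel eigenvalue of multiplicity at least $n-k$. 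On the orthogonal complement of that subspace, which is spanned by the part indicators $u_1,\dots,u_k$, pass to the orthonormal basis $p_i^{-1/2}u_i$: in it $S(G)+I$ restricts to the symmetric matrix $\widetilde B=\Diag(2p_1,\dots,2p_k)-\sqrt{p}\,\sqrt{p}^{T}$ with $\sqrt{p}=(\sqrt{p_1},\dots,\sqrt{p_k})^{T}$. So the remaining $k$ Seidel eigenvalues are $\mu_1-1\ge\cdots\ge\mu_k-1$, where $\mu_1\ge\cdots\ge\mu_k$ are the eigenvalues of $\widetilde B$.

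Next I would observe that $\widetilde B$ is $\Diag(2p_1,\dots,2p_k)$ minus a rank-one positive semidefinite matrix, and that $2p_1\ge\cdots\ge 2p_k$ since $p_1\ge\cdots\ge p_k$. Weyl's inequalities (equivalently, the interlacing theorem for rank-one perturbations) then give
\[
2p_1\ge\mu_1\ge 2p_2\ge\mu_2\ge\cdots\ge 2p_{k-1}\ge\mu_{k-1}\ge 2p_k\ge\mu_k\ge 2p_k-\|\sqrt{p}\|^{2}=2p_k-n .
\]
Subtracting $1$ yields statement (2) and the lower bound in (3); and because $\mu_{k-1}\ge 2p_k\ge 2$, it also yields $\lambda_{k-1}\ge 2p_k-1\ge 1>0$, the positivity in (1).

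For the upper bound in (3) I would invoke Lemma~\ref{spectra0}: the graph $K_{p_k,\dots,p_k}$ on $kp_k$ vertices is the subgraph of $G$ induced on $p_k$ vertices taken from each part, so $S(K_{p_k,\dots,p_k})$ is a principal submatrix of $S(G)$, and Cauchy interlacing gives $\lambda_n=\lambda_{\min}(S(G))\le\lambda_{\min}(S(K_{p_k,\dots,p_k}))=(2-k)p_k-1$. Together with the preceding step this forces $\mu_k\le(2-k)p_k\le 0$ (in fact $\mu_k\le -p_k\le -1$ once $k\ge 3$), so the $(-1)$-eigenspace has dimension exactly $n-k$ when $k\ge 3$ (and $n-1$ when $k=2$, where $\mu_2=0$); in all cases the Seidel spectrum splits as a positive block $\lambda_1\ge\cdots\ge\lambda_{k-1}>0$, the value $-1$ with multiplicity $n-k$, and a single smallest eigenvalue $\lambda_n$, which is (1). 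For (4): if $p_i>p_{i+1}$, then $2p_i$ and $2p_{i+1}$ are, respectively, the smallest and the largest entries of their blocks of equal diagonal values in $\Diag(2p_1,\dots,2p_k)$; since $\sqrt{p}$ has no zero coordinate, inspecting the eigen-equation $(\Diag(2p_j)-2p_iI)w=(\sqrt{p}^{T}w)\,\sqrt{p}$ shows $2p_i$ is an eigenvalue of $\widetilde B$ of multiplicity exactly one less than the size of its block, and a short counting argument then makes the two interlacing inequalities flanking $\mu_i$ strict, giving (4).

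I expect the main obstacle to be pinning down the smallest eigenvalue precisely: the rank-one/Weyl interlacing by itself only gives $\mu_k\le 2p_k$, which is too weak both for (3) and for the clean decomposition in (1), so the comparison of $G$ with its induced subgraph $K_{p_k,\dots,p_k}$ through the already-established Lemma~\ref{spectra0} seems essential. The other delicate point is upgrading the interlacing to the strict inequalities of (4), which rests on the fact that perturbing a diagonal matrix by $-vv^{T}$ with $v$ fully supported leaves each diagonal entry as an eigenvalue only with its forced multiplicity.
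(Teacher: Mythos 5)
Your proof is correct, and for parts (2) and (4) it takes a genuinely different route from the paper. The paper establishes (1) and the upper bound in (3) exactly as you do — Cauchy interlacing against the induced balanced subgraph $K_{p_k,\ldots,p_k}$ via Lemma~\ref{spectra0}, plus the observation that $I+S$ has rank at most $k$ (equivalent to your kernel description of the equitable partition), and gets the lower bound in (3) from $\trace S=0$ rather than from your norm bound $\mu_k\ge 2p_k-\|\sqrt p\|^2$ (the two are close cousins). But for (2) and (4) the paper does not give a self-contained argument: it writes down the characteristic polynomial factor $f(x)=\prod_i(x-2p_i+1)+\sum_j p_j\prod_{i\ne j}(x-2p_i+1)$ and cites the root-location analysis of \cite[Theorem 2.2]{WangZhaoLi2014}. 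Your replacement — reducing to the symmetric quotient $\widetilde B=\Diag(2p_1,\ldots,2p_k)-\sqrt p\,\sqrt p^{\,T}$ and applying rank-one Weyl interlacing, with the strictness in (4) extracted from the fact that a fully supported rank-one perturbation leaves each repeated diagonal value as an eigenvalue of multiplicity exactly one less than its block size — proves (2) and (4) from scratch and makes the lemma independent of \cite{WangZhaoLi2014}. (Incidentally, $f(x)$ is, up to sign, the characteristic polynomial of $\widetilde B$ shifted by $1$, so the two viewpoints are equivalent; yours trades polynomial root-counting for standard matrix perturbation theory.) You also correctly identify the one place where the quotient alone is insufficient, namely pinning $\mu_k-1$ down as $\lambda_n$ (i.e.\ $\mu_k\le 0$), and you close that gap with the same induced-subgraph interlacing the paper uses; your separate handling of $k=2$, where $\mu_2=0$, is also right.
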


\begin{proof}
The graph $K_{p_k,p_k,\ldots, p_k}$ is an induced subgraph of $K_{p_1, p_2, \ldots, p_k}$, with
Seidel eigenvalues $\mu_1\ge \mu_2\ge \ldots \ge \mu_{kp_k}$. By Lemma \ref{spectra0},
\[\mu_1=\ldots=\mu_{k-1}=2p_k-1>0.\]
By interlacing we get that
\[\lambda_i\ge \mu_i>0\]
for every $i=1, \ldots, k-1$,
and
\[(2-k)p_k-1=\mu_{kp_k}\ge \lambda_{n-kp_k+kp_k}=\lambda_n.\]
Let $S$ be the Seidel matrix of $K_{p_1,p_2,\ldots, p_k}$. It is easy to see that  in $I+S$ there are only $k$ different rows (the $i$-th of which is repeated
$p_i$
times, $i=1,\ldots, k$). Thus $\rank(I+S)\le k$, and $-1$ is an  eigenvalue of $S$ of multiplicity at least $n-k$. This completes the proof of (1) (and part of (3)).

The more detailed inequalities  on the Seidel eigenvalues in  (2) and (4) follow  from analysis of the characteristic polynomial of $S$, done in \cite[Theorem 2.2]{WangZhaoLi2014}. The polynomial
is
\[(x+1)^{n-k}\left[\prod_{i=1}^k\left(x-2p_i+1\right)+ \sum_{j=1}^k p_j\prod_{\underset{i\ne j}{i=1}}^k (x-2p_i+1),\right]\]
and thus the $n-k$ eigenvalues of $S$ other than the known  $-1$'s are the roots of
\[f(x)=\prod_{i=1}^k\left(x-2p_i+1\right)+ \sum_{j=1}^k p_j\prod_{\underset{i\ne j}{i=1}}^k (x-2p_i+1). \]
The inequality $\lambda_n\ge 2p_k-n-1$ follows from
\[0=\sum_{i=1}^n\lambda_i\le \sum_{i=1}^{k-1}(2p_i-1)+(n-k)(-1)+\lambda_n,\]
combined with $\sum_{i=1}^{k-1}p_i=n-p_k$. This completes the proof of (3).
\end{proof}

\begin{theorem}\label{main}
Let the Seidel spectrum of $G$ be equal to that of  the complete $k$-partite graph $K_{p_1, p_2, \ldots, p_k}$. Then
$G$ is a complete $k$-partite graph, up to switching.
\end{theorem}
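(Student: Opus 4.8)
The plan proceeds in two stages: first show that $G$ is switching equivalent to \emph{some} complete multipartite graph, then pin the number of parts down to $k$. Before that, dispose of the degenerate cases. If $p_1=\ldots=p_k=1$, then $S(G)$ has eigenvalue $1$ with multiplicity $n-1$, so $S(G)-I$ has rank one; as its diagonal is negative, $S(G)-I=-xx^{\top}$ for some $x\in\{\pm 1\}^n$, and conjugation by $\diag(x)$ turns $S(G)$ into $I-J$, so $G$ is switching equivalent to $K_n$. Symmetrically, if $k=1$, then $\rank(S(G)+I)=1$ with positive diagonal and $G$ is switching equivalent to the empty graph. So assume from now on $2\le k\le n-1$; by Lemma~\ref{spectra3}(1) the complete $k$-partite graph, and hence $G$, has exactly $k-1$ positive Seidel eigenvalues and satisfies $\lambda_{n-1}=-1$.

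For the first stage I would argue by contradiction: suppose $G$ is not switching equivalent to any complete multipartite graph. Fix a vertex $v$ and switch $G$ with respect to $N_G(v)$, obtaining a graph $G_v=K_1\cup H$ in the switching class of $G$ in which $v$ is isolated, where $H=G_v-v$. If $H$ had no induced $P_4$, $K_2\cup K_2$ or paw, then $K_2\cup K_2$-freeness would confine all edges of $H$ to one connected component $C$, and a short shortest-path argument (follow a shortest path from the supposed isolated vertex of an induced $K_1\cup K_2$ towards its edge) would show that a connected, $P_4$-free, paw-free graph has no induced $K_1\cup K_2$, hence that non-adjacency is an equivalence relation on $C$ and $C$ is complete multipartite (a special case of the structure theory of paw-free graphs); then $G_v$ would be a disjoint union of an independent set with $C$, and switching $G_v$ with respect to that independent set would produce a complete multipartite graph, a contradiction. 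Therefore $H$ contains an induced $P_4$, $K_2\cup K_2$ or paw; adjoining the isolated vertex $v$, and using that restricting a switching to a vertex subset is again a switching, the induced subgraph of $G$ on those five vertices is switching equivalent to $K_1\cup P_4$, to $K_1\cup K_2\cup K_2$, or to $K_1\cup H$. I expect this reduction, together with the structural claim about $P_4$-free, paw-free graphs, to be the main obstacle. (The reduction is vacuous for $n\le4$, where $H$ is too small to contain a forbidden graph and the argument already yields a complete multipartite graph; so one may assume $n\ge5$.)

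It remains to reach the contradiction and count parts. By Lemma~\ref{spectra2}, each of $K_1\cup P_4$, $K_1\cup K_2\cup K_2$, $K_1\cup H$ has fourth-largest Seidel eigenvalue $-\sqrt5$ or $\tfrac{1-\sqrt{17}}{2}$, in either case strictly below $-1$; but Cauchy interlacing for the five-vertex induced subgraph $G'$ of $G$ gives $\mu_4(G')\ge\lambda_{n-1}(G)=-1$, a contradiction. Hence $G$ is switching equivalent to a complete $l$-partite graph $K_{q_1,\ldots,q_l}$, which is then Seidel cospectral with $K_{p_1,\ldots,p_k}$; comparing numbers of positive Seidel eigenvalues through Lemma~\ref{spectra3} forces $l=k$, the only alternative being $l=1$, which requires $k=2$, and then $G$ is switching equivalent to the empty graph and hence to the complete bipartite graph $K_{1,n-1}$. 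In every case $G$ is a complete $k$-partite graph, up to switching.
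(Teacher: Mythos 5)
Your proposal is correct and follows essentially the same route as the paper's proof: switch with respect to a vertex's neighbourhood to isolate it, use interlacing together with Lemma~\ref{spectra2} to exclude induced $K_1\cup P_4$, $K_1\cup K_2\cup K_2$ and $K_1\cup H$, deduce that the edge-containing part is complete multipartite (the paper does this by the same kind of four-vertex case analysis you sketch), switch with respect to the isolated vertices, and count positive Seidel eigenvalues to fix the number of parts. Your separate treatment of the degenerate cases ($k\in\{1,n\}$ and the $l=1$ alternative in the final count) is if anything slightly more careful than the paper's, but the argument is the same.
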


\begin{proof}
We first show that $G$ is complete multipartite up to switching.
By using a switching with respect to the neighborhood of a single vertex, we may assume that
$G=K_1\cup D$. As the Seidel spectrum of $G$ is equal to that of $K_{p_1, p_2, \ldots, p_k}$,
$\lambda_{n-1}=-1$. If  $\mu_1\ge \ldots\ge \mu_5$ are the Seidel eigenvalues of an  induced subgraph of $G$ on $5$ vertices,  $\mu_{4}\ge \lambda_{n-5+4}=-1$ by interlacing. Hence,
by Lemma \ref{spectra2}, $G$ cannot have an induced $K_1\cup K_2\cup K_2$, or an induced
$K_1\cup P_4$, or an induced $K_1\cup  H$, where $H$ is the paw graph. Therefore $D$ cannot contain an induced $K_2\cup K_2$ or an induced $P_4$, or an induced paw.

At most one of the components of $D$ may contain an edge, since otherwise
$D$ contains an induced $K_2\cup K_2$, contrary to the assumption.
Let  $\chi(D)=t$ be the chromatic number of $D$. If $t>1$ then $D$ contains (exactly one) component $C$ with
at least one edge, and $\chi(D)=\chi(C)$.
Let $C=V_1\cup V_2\cup \ldots\cup V_t$, such that $v{\not\sim}u$ whenever $v, u\in V_i$,
and for each $i\ne j$ there is an edge with one end in $V_i$ and one end in $V_j$.
Let $v\in V_i$ and $u\in V_j$ be neighbors. If $w\in V_i$, $w\ne v$, then
$w$ has to be a neighbor of $u$ too. To see that, suppose on the contrary that $w{\not\sim}u$. Then by the connectivity of $C$, $w$ has a neighbor $z\ne u$.
As $v{\not\sim}w$, the subgraph of $D$ induced on vertices  $\{v,u,z,w\}$ is one of the following: either
an induced $K_2\cup K_2$ (if it has only the edges $vu$ and $wz$), or  an induced $P_4$ (if $u\sim z$ and $v\not\sim z$), or  an induced paw (if $v$ and $u$ are both neighbors of $z$).
This contradicts the observation above, that none of these is a possible induced subgraph of $D$. Thus every neighbor of $v\in V_i$ is also
a neighbor of all the other vertices in $V_i$. Suppose $u\in V_j$, $j\ne i$. By the same argument, each vertex in $V_j$
is a neighbor of each of the vertices in $V_i$. Hence the graph induced on $V_i\cup V_j$ is complete bipartite.
Since this holds for any  $i\ne j$,  the graph $C$ is complete $t$-partite graph. Therefore $D$, and thus $G$, consists of a complete multipartite graph and isolated vertices.

Let $U$ be the set of all isolated vertices of $K_1\cup D$. Switching with respect to $U$ yields a complete $(t+1)$-partite graph.
Hence $G$ is a complete $r$-partite graph for some $r$ (up to switching). The number of positive Seidel eigenvalues of $G$
is therefore $r-1$. On the other hand, the Seidel eigenvalues of $G$ are those of $K_{p_1, p_2, \ldots, p_k}$, hence $r-1=k-1$, implying that $G$ is a
complete $k$-partite graph, up to switching.
\end{proof}

According to the next theorem, two complete $k$-partite graphs, $k\ge 3$, can be switching equivalent only if they are isomorphic.

\begin{theorem}\label{difkpart}
Let $p_1\ge p_2\ge \ldots \ge p_k\ge 1$ and $q_1\ge q_2\ge \ldots \ge q_k\ge 1$ be  two different $k$-tuples, $k\ge 3$, such
that $\sum_{i=1}^kp_i=\sum_{i=1}^kq_i$. Then $K_{p_1, p_2, \ldots, p_k}$ and $K_{q_1, q_2,\ldots, q_k}$
are not switching-equivalent.
\end{theorem}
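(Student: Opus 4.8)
The plan is to argue by contradiction. Suppose $K_{p_1,\ldots,p_k}$ and $K_{q_1,\ldots,q_k}$ are switching-equivalent. Switching-equivalence is strictly stronger than being Seidel-cospectral — indeed, as the paper goes on to observe, there are cospectral complete tripartite graphs with different part sizes — so we cannot argue from the Seidel spectrum alone and must work directly with the signature matrix. After possibly relabelling vertices we may assume both graphs have vertex set $V$ and that $S(K_q)=\Lambda S(K_p)\Lambda$ for a signature matrix $\Lambda=\diag(\lambda_v)_{v\in V}$, $\lambda_v\in\{1,-1\}$; equivalently $s^q_{uv}=\lambda_u\lambda_v\,s^p_{uv}$ for all $u\neq v$, where $s^p_{uv},s^q_{uv}$ denote the Seidel entries of the two graphs. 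Since in a complete multipartite graph two distinct vertices lie in the same part exactly when the corresponding Seidel entry is $+1$, it suffices to prove that $\Lambda=\pm I$: then $S(K_q)=S(K_p)$, so $K_p$ and $K_q$ induce the same partition of $V$, and hence $\{p_1,\ldots,p_k\}=\{q_1,\ldots,q_k\}$, contradicting that the two $k$-tuples are different.

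Write $V^{+}=\{v:\lambda_v=1\}$ and $V^{-}=\{v:\lambda_v=-1\}$, let $A_1,\ldots,A_k$ be the parts of $K_p$, and set $A_i^{\pm}=A_i\cap V^{\pm}$. The relation $s^q_{uv}=\lambda_u\lambda_v s^p_{uv}$ translates into two rules about the parts of $K_q$: (i) two vertices in the same $K_p$-part are in the same $K_q$-part if they have the same sign, and in different $K_q$-parts if they have opposite signs; (ii) two vertices in different $K_p$-parts are in different $K_q$-parts if they have the same sign, and in the same $K_q$-part if they have opposite signs. Consequently each nonempty $A_i^{+}$ and each nonempty $A_i^{-}$ lies inside a single $K_q$-part, and for $i\neq j$ the sets $A_i^{+}$ and $A_j^{-}$, when nonempty, lie inside the same $K_q$-part.

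Now assume for contradiction that both $V^{+}$ and $V^{-}$ are nonempty. The heart of the argument uses $k\ge 3$ as follows. Suppose $A_i^{+}\neq\emptyset$ and $A_j^{-}\neq\emptyset$ with $i\neq j$; by the above they lie in a common $K_q$-part $B$. Pick a third part $A_m$, $m\neq i,j$, which is nonempty, so one of $A_m^{+},A_m^{-}$ is nonempty. If $A_m^{+}\neq\emptyset$, a vertex of $A_m^{+}$ is in a common $K_q$-part with a vertex of $A_j^{-}$ by rule (ii), hence $A_m^{+}\subseteq B$; but then $A_i^{+}$ and $A_m^{+}$ both lie in $B$ while rule (ii) makes a vertex of $A_i^{+}$ adjacent in $K_q$ to a vertex of $A_m^{+}$ — impossible for two vertices of the same part. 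The case $A_m^{-}\neq\emptyset$ is symmetric (now $A_m^{-}\subseteq B$ and $A_m^{-},A_j^{-}$ give the contradiction). Therefore there is no pair $i\neq j$ with $A_i^{+}\neq\emptyset$ and $A_j^{-}\neq\emptyset$; since $V^{+},V^{-}\neq\emptyset$, the indices $i$ with $A_i^{+}\neq\emptyset$ and the indices $j$ with $A_j^{-}\neq\emptyset$ must then all coincide in a single index $i$, whence $V=V^{+}\cup V^{-}\subseteq A_i$ and $k=1$ — again impossible. Hence $V^{+}=\emptyset$ or $V^{-}=\emptyset$, i.e. $\Lambda=\pm I$, and the proof concludes as in the first paragraph.

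I expect the only delicate part to be the bookkeeping in the third paragraph: keeping track of which of the sets $A_i^{\pm}$ are empty, and checking the two symmetric ``forced into $B$, then adjacent'' sub-cases, while making explicit that it is exactly the hypothesis $k\ge 3$ that supplies the third part $A_m$. That the hypothesis is genuinely needed is shown by the case $k=2$, where all complete bipartite graphs on a fixed number of vertices are switching-equivalent by Lemma \ref{spectra1}(c).
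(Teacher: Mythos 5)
Your proof is correct and follows essentially the same route as the paper's: your sets $A_i^{+}$ and $A_i^{-}$ are exactly the paper's $V_{i2}$ and $V_{i1}$ (the intersections of the parts of $K_{p_1,\ldots,p_k}$ with the two sides of the switch), and both arguments use $k\ge 3$ to produce a third part whose vertices are simultaneously forced into a single part of $K_{q_1,\ldots,q_k}$ and forced to be adjacent to something already there. Your single symmetric claim (no ``crossing pair'' $A_i^{+}\neq\emptyset$, $A_j^{-}\neq\emptyset$ with $i\neq j$) is a slightly cleaner packaging of the paper's sequential elimination of the sets $V_{i2}$, but the underlying idea is identical.
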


\begin{proof}
Let $V_i$ denote the independent set in $G=K_{p_1, p_2, \ldots, p_k}$ of size $p_i$,  $i=1,\ldots, k$.
Let $W_i$, $i=1,\ldots, k$, be the independent sets in $K_{q_1, q_2,\ldots, q_k}$.
Suppose there exists $U\subseteq \cup_{i=1}^k V_i$ such that
switching $G$ with respect to $U$ yields $G'=K_{q_1, q_2,\ldots, q_k}$. The set $U$ is not
empty, since by the assumption on the $k$-tuples $G'\ne G$.

We denote $V_{i1}=V_i\cap U$, and $V_{i2}=V_i\setminus V_{i1}$.
In $G'$, each vertex in $V_{i1}$ is connected by an edge with each vertex in $V_{i2}$, and
with each vertex in $V_{j1}$, $j\ne i$, and to no other vertices. Similarly, each vertex in $V_{i2}$ is connected also
with each vertex in $V_{j2}$, $j\ne i$.

Suppose without loss of generality that $V_{11}\ne \emptyset$. Then at most one of $V_{i2}$, $i=2, \ldots, k$,
is not empty. For if $V_{i2}, V_{j2}\ne \emptyset$, for $i, j\ge 2$, $i\ne j$, then
as $V_{11}\subseteq W_\ell$ for some $\ell$, and $V_{11}\cup V_{i2}$ is independent in $G'$,
also $V_{i2}\subseteq W_\ell$. Similarly, $V_{j2}\subseteq W_\ell$. But since there are
edges between $V_{i2}$ and $V_{j2}$, this contradicts the independence of $W_\ell$.
So suppose without loss of generality that $V_{i2}=\emptyset$ for $3\le i\le k$.
Then $V_{i1}=V_i$ for $3\le i\le k$.

Now both $V_{11}$ and $V_{k1}$ are not empty. If $V_{22}\ne \emptyset$, then $V_{11}\cup V_{22}$ is independent in $G'$
and thus contained in $W_\ell$. But then since $V_{k1}\cup V_{22}$ is independent, also $V_{k1}\subseteq W_\ell$. A
contradiction, since there is an edge between $V_{11}$ and $V_{k1}$. Hence $V_{22}$ is empty, and
$V_{21}=V_2$ is non-empty. By the independence of both $V_{k1}\cup V_{12}$ and $V_{21}\cup V_{12}$ in $G'$, and
the existence of edges between $V_{k1}$ and $V_{21}$,
we get that $V_{12}$ has also to be empty. That is, $V_{11}=V_1$, and thus $U$ consists of all the vertices of $G$,
which means that $G'=G$, contrary to the assumption that the $k$-tuples are different.
\end{proof}

The last theorem leaves out the case $k=2$.

\begin{remark}\label{Kpq=Kst}
{\rm By Lemma \ref{spectra1} all complete bipartite graphs are Seidel cospectral.
However, any two complete bipartite graphs are also switching equivalent:
Let $K_{p,q}$ and $K_{s,t}$ be two non-isomorphic complete bipartite graphs, with $p+q=s+t=n$, $\{s,t\}\ne \{p,q\}$.
Suppose $p\ge q$, $s\ge t$, and $p>s$.
Let $V_1$ and $V_2$ be the independent sets in $K_{p,q}$, $|V_1|=p$, $|V_2|=q$.
Let $U\subseteq V_1$ be any set of $s-q$ vertices
(by our assumptions, $p-t=s-q$  and $p-t>s-t\ge 0$, so $p>s-q>0$). Then after switching $K_{p,q}$ with
respect to $U$, we get $K_{s,t}$  with independent sets $W_1=U\cup V_2$ and $W_2=V_1\setminus U$.}
\end{remark}

Combining Remark \ref{Kpq=Kst} with Theorem \ref{main} we get that if a graph $G$
is cospectral with $K_{p,q}$, then $G$ is switching equivalent to  a complete bipartite graph, and therefore
to $K_{p,q}$.

\begin{theorem}
Any complete bipartite graph is $S$-determined.
\end{theorem}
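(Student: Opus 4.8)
The plan is to combine the two results already established. The statement to prove is that any complete bipartite graph $K_{p,q}$ is $S$-determined, meaning that any graph $G$ with the same Seidel spectrum as $K_{p,q}$ must be switching equivalent to a graph isomorphic to $K_{p,q}$.

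First I would invoke Theorem \ref{main} with $k=2$: if $G$ has the same Seidel spectrum as $K_{p,q}$, then $G$ is a complete bipartite graph, up to switching. That is, $G$ is switching equivalent to some complete bipartite graph $K_{s,t}$ with $s+t = p+q = n$. Second, I would apply Remark \ref{Kpq=Kst}, which shows that any two complete bipartite graphs on the same number of vertices are switching equivalent to each other; in particular $K_{s,t}$ is switching equivalent to $K_{p,q}$. Since switching equivalence is an equivalence relation (transitive), $G$ is switching equivalent to $K_{p,q}$, which establishes that $K_{p,q}$ is $S$-determined.

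The proof is essentially a one-line assembly of the preceding results, so there is no real obstacle here — the work has all been done in Theorem \ref{main} and Remark \ref{Kpq=Kst}. The only point requiring a moment's care is the degenerate case $q = 0$, i.e.\ $K_{p,0}$ being the empty graph on $p$ vertices (or $K_{n}$ being a single part): one should note that the argument via Theorem \ref{main} with $k=1$, together with Lemma \ref{spectra1}, still applies, so the empty graph is $S$-determined as well, though typically one restricts attention to $p,q \ge 1$. I would also remark, as the authors already do in the sentence immediately preceding the theorem, that this contrasts with the $k \ge 3$ situation governed by Theorem \ref{difkpart}, where distinct partition tuples give non-switching-equivalent graphs, so that $S$-determination becomes a genuine question rather than an automatic consequence.
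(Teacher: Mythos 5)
Your argument is exactly the paper's: the authors state the theorem as an immediate consequence of combining Theorem \ref{main} (with $k=2$) and Remark \ref{Kpq=Kst}, using transitivity of switching equivalence just as you do. The proposal is correct and matches the intended proof.
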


In some cases a complete $k$-partite graph is  determined by its
Seidel spectrum up to switching. The following is one such case.

\begin{theorem}\label{Kpiqi}
Let $p_1> \ldots > p_l\ge  1$. The graph $K_{\underset{s_1}{\underbrace{p_1,...,p_1}},\ldots,\underset{s_l}{\underbrace{p_l,...,p_l}}}$, where $s_i\geq 3$ for every $i=1,\ldots,l$, is $S$-determined.
\end{theorem}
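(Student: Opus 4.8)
The strategy is to combine the two preceding theorems. Let $G$ have the same Seidel spectrum as $K := K_{\underbrace{p_1,\dots,p_1}_{s_1},\dots,\underbrace{p_l,\dots,p_l}_{s_l}}$, and set $k = s_1 + \dots + s_l$, so that $K$ is a complete $k$-partite graph. By Theorem \ref{main}, $G$ is switching equivalent to a complete $k$-partite graph, say $K_{q_1,\dots,q_k}$ with $q_1 \ge \dots \ge q_k \ge 1$ and $\sum q_i = \sum_{i} s_i p_i = n$. Since $k \ge 3$ (because each $s_i \ge 3$), Theorem \ref{difkpart} tells us that $K_{q_1,\dots,q_k}$ and $K$ are switching equivalent only if the $k$-tuples coincide. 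So what remains is purely spectral: I must show that the Seidel spectrum of a complete $k$-partite graph determines its part-size multiset $(q_1,\dots,q_k)$, at least among those multisets of the special form appearing in $K$ (three or more parts of each distinct size). Once that is done, $(q_1,\dots,q_k)$ must equal the multiset $(\underbrace{p_1,\dots}_{s_1},\dots)$, hence $K_{q_1,\dots,q_k} \cong K$, and $G$ is switching equivalent to a graph isomorphic to $K$, as required.

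For the spectral recovery step, the tool is Lemma \ref{spectra3}. First, the number of positive Seidel eigenvalues of $K_{q_1,\dots,q_k}$ is exactly $k-1$, which pins down $k$; but more is true. Part (1) of Lemma \ref{spectra3} says the positive eigenvalues are $\lambda_1 \ge \dots \ge \lambda_{k-1}$, then $-1$ with multiplicity $n-k$, then the single eigenvalue $\lambda_n$. From the spectrum we read off $k$ (number of eigenvalues exceeding $-1$, plus one) and $n$ (total count). Now I use parts (2) and (4): the distinct values among $q_1,\dots,q_k$ and their multiplicities are detected by where the interlacing inequalities $2q_i - 1 \ge \lambda_i \ge 2q_{i+1}-1$ are strict. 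Concretely, if a value $p$ occurs with multiplicity $s \ge 2$ in $(q_1,\dots,q_k)$, say $q_{j} = q_{j+1} = \dots = q_{j+s-1} = p$ with $q_{j-1} > p > q_{j+s}$ (boundary cases handled separately), then by part (4) applied at the two ends we get $2q_{j-1} - 1 > \lambda_{j-1} > 2p-1 > \lambda_j$ and $\lambda_{j+s-1} > 2q_{j+s} - 1$, while for the interior indices $j \le i \le j+s-2$ Lemma \ref{spectra3}(2) squeezes $\lambda_i$ between $2p-1$ and $2p-1$, forcing $\lambda_i = 2p-1$. So a part-value $p$ of multiplicity $s \ge 3$ contributes exactly $s-1$ copies of the eigenvalue $2p-1$ to the spectrum (and two "boundary" eigenvalues strictly on either side of $2p-1$ that belong to neighbouring blocks). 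Reading this off in reverse: the eigenvalue $2p-1$ appearing with multiplicity exactly $s-1$ (and not as a boundary value of an adjacent block — which is where the hypothesis $s_i \ge 3$, and the fact that distinct $p_i$ are well separated, gets used to avoid collisions) recovers that $p$ is a part size of multiplicity $s$.

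The delicate point — and the step I expect to be the main obstacle — is making the bookkeeping of the previous paragraph airtight, i.e. showing the correspondence between (part size $p$, multiplicity $s$) and (eigenvalue $2p-1$, multiplicity $s-1$) is a genuine bijection with no interference between blocks. Two things make it work under the hypothesis $s_i \ge 3$: (i) because each multiplicity is at least $3$, each block genuinely contributes at least $s_i - 1 \ge 2$ copies of $2p_i - 1$, so these "bulk" eigenvalues cannot be confused with the at-most-two stray boundary eigenvalues a block can produce in its neighbours; and (ii) the values $2p_i - 1$ for distinct $i$ are distinct and, using the interlacing bounds of Lemma \ref{spectra3}(2), one checks the boundary eigenvalue sitting just below $2p_i-1$ is strictly above $2p_{i+1}-1 \ge 2p_{i+1}-1$, hence cannot coincide with any $2p_j-1$ for $j \ne i$; so no accidental equalities inflate a multiplicity. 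I would organize this as: (a) prove that for $K_{q_1,\dots,q_k}$ with every distinct part-size having multiplicity $\ge 3$, the multiset of values $\{2p-1 : p \text{ a distinct part size}\}$ with weights $\{(\text{mult of }p) - 1\}$ is recoverable from the Seidel spectrum; (b) conclude that this data, which for $K$ is $\{(2p_i-1, s_i - 1)\}_{i=1}^l$, together with $n$ and $k$, determines the full multiset $(q_1,\dots,q_k)$ (the leftover "mass" $n - \sum (2p_i-1)\cdot 0$-type counting, or simply: we now know all distinct part sizes and their multiplicities). Then finish as in the first paragraph via Theorems \ref{main} and \ref{difkpart}.
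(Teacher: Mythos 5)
Your proposal is correct and rests on the same two pillars as the paper's own proof: the reduction via Theorem \ref{main} to the case where the cospectral graph is itself a complete $k$-partite graph $K_{q_1,\dots,q_k}$ (the paper leaves this reduction implicit, and, like you, does not actually need Theorem \ref{difkpart} for the positive direction), and the interlacing inequalities of Lemma \ref{spectra3}(2) and (4). Where you diverge is in how the part sizes are extracted from that data. The paper argues positionally: for each $i$ it shows $q_j=p_i$ for all indices $j=r_{i-1}+1,\dots,r_i-1$ in the interior of the $i$-th block (part (2) handles $j=r_{i-1}+2,\dots,r_i-1$, part (4) pulls in the first index), and then pins down the remaining entries $q_{r_i}$ by comparing $\sum_j q_j$ with $\sum_i s_ip_i$. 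You instead count multiplicities: since part (4) places each ``boundary'' eigenvalue in an open interval containing no value $2Q-1$ with $Q$ a part size, a value $2p_i-1$ occurring with multiplicity $s_i-1\ge 2$ in the common spectrum forces $p_i$ to be a part size of $K_{q_1,\dots,q_k}$ of multiplicity exactly $s_i$, and $\sum_i s_i=k$ then leaves no room for further parts. Thus the hypothesis $s_i\ge 3$ enters for you as ``eigenvalue multiplicity at least $2$'', whereas in the paper it guarantees each block has nonempty interior; your counting step replaces the paper's trace argument and is, if anything, slightly cleaner. One caveat: your step (a) is phrased as a recovery statement for graphs all of whose part-size multiplicities are at least $3$, which is a property of the known graph $K$, not something you may assume about the unknown $K_{q_1,\dots,q_k}$; the argument must be run in the direction ``an eigenvalue $2P-1$ of multiplicity at least $2$ implies $P$ is a part size of multiplicity one more'', which your points (i) and (ii) in fact supply once written out.
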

\begin{proof}
Let  $r_i=\sum_{j=1}^{i}s_j, i=1,2,\ldots,l$, $r_0=0$. Denote $k=r_l$. Let $\lambda_1\ge \ldots\ge \lambda_n$, where
$n=\sum_{i=1}^l s_ip_i$, be the Seidel eigenvalues of the $k$-partite graph $K_{\underset{s_1}{\underbrace{p_1,...,p_1}},\ldots,\underset{s_l}{\underbrace{p_l,...,p_l}}}$. By Lemma \ref{spectra3},
for  $i=1, \ldots, l$
\[\lambda_j=2p_i-1,\quad j=r_{i-1}+1, \ldots, r_i-1,\]
and
\[2p_i-1>\lambda_{r_i}>2p_{i+1}-1.\]

If for $q_1\ge q_2\ge \ldots \ge q_k$ the graph
$K_{q_1, q_2, \ldots, q_k}$ has the same Seidel spectrum, then by Lemma \ref{spectra3}(2), for $i=1,\ldots,l$,
\[2q_j-1\geq \lambda_j=2p_i-1 \geq 2q_{j+1}-1, j=r_{i-1}+1,\ldots,  r_{i}-1.\]
Hence $q_{j}=p_i$ for  $j=r_{i-1}+2,\ldots, r_{i}-1$.

By Lemma \ref{spectra3}(4),   $q_{r_{i-1}+1}>q_{r_{i-1}+2}$  is impossible, since otherwise
\[2q_{r_{i-1}+1}-1>\lambda_{r_{i-1}+1}>2q_{r_{i-1}+2}-1,\] contrary to $\lambda_{r_{i-1}+1}=2p_i-1=2q_{r_{i-1}+2}-1$ obtained above. Hence, $q_{j}=p_i$, for $j=r_{i-1}+1,\ldots, r_{i}-1$, $i=1, \ldots, l$.

Since $\sum_{j=1}^k q_j=\sum_{i=1}^k s_ip_i$, we have     $\sum_{i=1}^{l}q_{r_i}=\sum_{i=1}^{l}p_i$.
As $q_{r_i}\le q_{r_i-1}=p_i$  for every $i=1, \ldots, l$, the latter equality of  sums implies that
$q_{r_i}=p_i$ for every $i$.
\end{proof}

In general, however, there may be complete $k$-partite graphs that are not switching-equivalent and have
the same Seidel spectrum. Some examples of such tripartite graphs are included in the next section.

\section{Complete tripartite graphs}\label{comtripar}
It was shown in \cite{LvWeiZhao2012} that the
characteristic polynomial of the Seidel matrix of the complete tripartite graph $K_{p,q,r}$ is
\[(\lambda+1)^{n-3}(\lambda^3+\lambda^2(3-n)+\lambda(3-2n)+(4pqr-n+1)),\]
where $n=p+q+r$. Thus if $\lambda_1>\lambda_2$ are the two positive Seidel eigenvalues of $K_{p,q,r}$ and
$\lambda_n$ is its smallest Seidel eigenvalue, then $\lambda_1, \lambda_2, \lambda_n$
are the roots of the polynomial \[\lambda^3+\lambda^2(3-(p+q+r))+\lambda(3-2(p+q+r))+(4pqr-(p+q+r)+1).\]
As  the other $n-3$ eigenvalues in both graphs are all equal to $-1$, $K_{x,y,z}$ has exactly the same Seidel
spectrum as $K_{p,q,r}$  if and only if
the following two equalities hold:
\begin{align}
x+y+z&=p+q+r   \label{p+q+r}\\
xyz&=pqr ~~.\label{pqr}
\end{align}
Combined with Theorem \ref{difkpart} this implies the following.

\begin{observation}\label{dif3part}
The complete tripartite graph $K_{p,q,r}$ is $S$-determined  if and only if
the unique solution (up to permutation) to the system of equations \eqref{p+q+r} and \eqref{pqr} is
 $\{x,y,z\}=\{p,q,r\}$.
\end{observation}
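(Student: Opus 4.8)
The plan is to read off the statement from Theorem~\ref{main}, Theorem~\ref{difkpart}, and the characteristic-polynomial formula of \cite{LvWeiZhao2012} quoted above: the two displayed equations \eqref{p+q+r} and \eqref{pqr} already capture exactly when a second complete tripartite graph is Seidel cospectral with $K_{p,q,r}$, so all that remains is to connect ``Seidel cospectral complete tripartite graph'' with ``$S$-determined'' in both directions. Throughout, by a \emph{solution} of the system I mean a triple of positive integers, which is the only meaningful case since the parts of a tripartite graph are nonempty; this costs nothing, as $xyz=pqr\ge 1$ forces $x,y,z\ge 1$ automatically.

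For the ``only if'' direction I would argue contrapositively. Suppose $(x,y,z)$ satisfies \eqref{p+q+r} and \eqref{pqr} but $\{x,y,z\}\ne\{p,q,r\}$. Substituting into the cubic factor shows that the characteristic polynomial of $S(K_{x,y,z})$ equals that of $S(K_{p,q,r})$, so---the Seidel matrix being symmetric and hence determined up to spectrum by its characteristic polynomial---$K_{x,y,z}$ is Seidel cospectral with $K_{p,q,r}$. By Theorem~\ref{difkpart} with $k=3$, the (non-isomorphic) graphs $K_{x,y,z}$ and $K_{p,q,r}$ are not switching equivalent, and the same theorem, applied to all complete tripartite graphs on $n$ vertices, rules out $K_{x,y,z}$ being switching equivalent to any graph isomorphic to $K_{p,q,r}$. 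Hence $K_{x,y,z}$ witnesses that $K_{p,q,r}$ is not $S$-determined.

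For the ``if'' direction, assume the only solution is $\{x,y,z\}=\{p,q,r\}$, and let $G$ be any graph with the same Seidel spectrum as $K_{p,q,r}$. By Theorem~\ref{main} applied with $k=3$, $G$ is switching equivalent to a complete tripartite graph $K_{x,y,z}$ (with $x,y,z\ge 1$); since switching preserves the Seidel spectrum, $K_{x,y,z}$ is Seidel cospectral with $K_{p,q,r}$. Equating the two characteristic polynomials, the factors $(\lambda+1)^{n-3}$ agree and the two monic cubics coincide; the coefficient of $\lambda^2$ then yields $x+y+z=p+q+r$ and, this in hand, the constant term yields $xyz=pqr$. So $(x,y,z)$ is a solution of the system, and by hypothesis $\{x,y,z\}=\{p,q,r\}$; thus $K_{x,y,z}\cong K_{p,q,r}$ and $G$ is switching equivalent to a graph isomorphic to $K_{p,q,r}$, i.e.\ $K_{p,q,r}$ is $S$-determined.

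I do not expect a genuine obstacle here, since the substantive work is already done in the preceding theorems. The only two steps that deserve a word of care are: that equality of the two monic cubics is really equivalent to the pair \eqref{p+q+r}--\eqref{pqr}, which holds because the linear coefficient $3-2n$ depends only on $n=x+y+z$ and so imposes no third independent constraint; and that the hypothesis $k=3\ge 3$ is precisely what makes Theorem~\ref{difkpart} applicable, which is exactly the ingredient that upgrades ``switching equivalent to a complete tripartite graph with the right spectrum'' to ``isomorphic to $K_{p,q,r}$.''
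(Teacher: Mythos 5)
Your proposal is correct and follows essentially the same route as the paper: the paper obtains the observation by combining the characteristic-polynomial computation (which shows cospectrality of two complete tripartite graphs is equivalent to \eqref{p+q+r} and \eqref{pqr}) with Theorem~\ref{difkpart}, and implicitly Theorem~\ref{main} for the direction you call ``if.'' Your write-up merely makes explicit the details the paper leaves to the reader.
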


\begin{example}\label{661}
The graphs $K_{6,6,1}$ and $K_{9,2,2}$ have the same Seidel spectrum, but are not switching-equivalent.
\end{example}

We mention a few more observations:
\begin{itemize}
\item Suppose one of $x,y,z$ is equal to one of $p,q,r$, say $x=p$. Then
$y$ and $z$ have to have the same sum and the same product as $q$ and $r$. But $q,r$ is the only pair of integers
with sum $q+r$ and product $qr$. Thus if $K_{x,y,z}$ has the same Seidel spectrum as $K_{p,q,r}$  and is
not switching-equivalent to $K_{p,q,r}$, then $\{x,y,z\}\cap\{p,q,r\}=\emptyset$.
\item  If $K_{x,y,z}$ has
the same Seidel spectrum as $K_{p,q,r}$, then $K_{kx,ky,kz}$ has the same spectrum as $K_{kp,kq,kr}$.
Thus if $K_{p,q,r}$ is not   $S$-determined, then for every positive
integer $k$ the graph $K_{kp,kq,kr}$ is not  $S$-determined.  The converse does not hold: $K_{10,2,2}$
has the same Seidel spectrum as $K_{8,5,1}$, but $K_{5,1,1}$ is $S$-determined, see Theorem \ref{pq1} below.
\end{itemize}

It is not hard to find infinitely many pairs of different triples $p, q, r$ and $x, y, z$ such that
$K_{p,q,r}$ and $K_{x,y,z}$ have the same Seidel spectrum. Some examples will be given below. In fact, it was shown in \cite{Schinzel1996} that for every  positive integer $m$
there
are infinitely many sets of $m$ different primitive triples sharing the same sum and the same product   (where a triple is primitive
if the greatest common divisor of its elements is $1$).

In the remainder of this section, we mention some cases when complete tripartite graphs are $S$-determined, and some cases when they are not.

\begin{theorem}\label{casesdetermined}
In the following cases the graph $K_{p,q,r}$ is $S$-determined:
\begin{enumerate}
\item[{\rm(a)}]  $p=q=r$.
\item[{\rm(b)}]   $p,q,r$ are all powers of the same prime $a$.
\item[{\rm(c)}] $\max\{p,q,r\}$ is prime.
\end{enumerate}
\end{theorem}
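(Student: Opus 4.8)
The plan is to use Observation \ref{dif3part}: in each case I must show that the only solution in positive integers (up to permutation) of the system $x+y+z = p+q+r$, $xyz = pqr$ is $\{x,y,z\} = \{p,q,r\}$. All three cases reduce to an elementary number-theoretic argument exploiting the very restricted prime factorization of $pqr$.

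For case (a), $p=q=r$, we have $xyz = p^3$ and $x+y+z = 3p$. By AM--GM, $x+y+z \ge 3\sqrt[3]{xyz} = 3p$, with equality if and only if $x=y=z$; since equality is forced, $x=y=z=p$. For case (b), write $p = a^{\alpha}$, $q = a^{\beta}$, $r = a^{\gamma}$ with $\alpha \ge \beta \ge \gamma \ge 0$. Then $xyz = a^{\alpha+\beta+\gamma}$, so each of $x,y,z$ is itself a power of $a$, say $x = a^{u}$, $y = a^{v}$, $z = a^{w}$ with $u+v+w = \alpha+\beta+\gamma$ and (reordering) $u \ge v \ge w$. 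The constraint becomes $a^{u} + a^{v} + a^{w} = a^{\alpha} + a^{\beta} + a^{\gamma}$. Here I would argue by comparing base-$a$ expansions or by a direct induction: the largest term on each side is determined by the equation (one can peel off the dominant power of $a$), forcing $u = \alpha$; subtracting and repeating gives $v = \beta$, $w = \gamma$. A clean way to phrase this is: among all triples of nonnegative integers summing to the fixed value $N = \alpha+\beta+\gamma$, the map $(u,v,w) \mapsto a^u + a^v + a^w$ is injective on sorted triples, because if $u \ge v \ge w$ then $a^v + a^w < a^v + a^w + \cdots < 2a^v \le a^{v+1} \le a^u$ whenever... — one must be slightly careful when $a = 2$ and exponents coincide, so the cleanest route is to observe that $a^u$ is the unique power of $a$ lying in the half-open interval determined by the sum, or simply to note that $\lfloor \log_a(a^u+a^v+a^w) \rfloor = u$ when the three are not all equal, and handle the all-equal subcase separately.

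For case (c), suppose $m := \max\{p,q,r\}$ is prime; say $p = m$. Since $m \mid xyz$ and $m$ is prime, $m$ divides one of $x,y,z$, say $m \mid x$, so $x = m t$ for some positive integer $t$. Then $x + y + z = m + q + r$ gives $m(t-1) = q + r - y \le q + r \le 2(m-1) < 2m$, so $t - 1 \in \{0,1\}$, i.e. $t \in \{1,2\}$. If $t = 1$ then $x = m = p$, and by the first bulleted observation in the paper (if one of $x,y,z$ equals one of $p,q,r$ then the triples coincide) we are done. If $t = 2$ then $x = 2m$, and $y + z = q + r - m < m$ (since $q, r < m$... actually $q+r \le 2(m-1)$ so $y+z \le m - 2$), while $yz = pqr/x = qr/2$; so $y,z$ are roots of $T^2 - (q+r-m)T + qr/2 = 0$, whose discriminant is $(q+r-m)^2 - 2qr$. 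I would show this is negative — equivalently $(q+r-m)^2 < 2qr$ — which follows because $q + r - m \le q + r - \max(q,r) = \min(q,r)$, hence $(q+r-m)^2 \le \min(q,r)^2 \le qr < 2qr$, so $t=2$ yields no real solution and is impossible.

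The main obstacle is the bookkeeping in case (b): making the "injectivity of $(u,v,w)\mapsto a^u+a^v+a^w$ on sorted triples" argument fully rigorous requires care at the boundary cases $a=2$ and repeated exponents, where naive inequalities like $2a^v \le a^{v+1}$ become equalities. I expect to handle this by treating the all-equal case via AM--GM (as in case (a)) and, for the remaining cases, by extracting the top power of $a$ cleanly — for instance, noting that if not all of $\alpha,\beta,\gamma$ are equal then $a^{\alpha} \le a^{\alpha}+a^{\beta}+a^{\gamma} < 3a^{\alpha} \le a^{\alpha+2}$, which pins $\alpha$ down modulo a two-step ambiguity that the same argument on the reduced equation resolves. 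Cases (a) and (c) are short and self-contained given the earlier observations.
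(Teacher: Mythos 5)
Your overall strategy is the paper's: reduce to showing, via Observation~\ref{dif3part}, that \eqref{p+q+r} and \eqref{pqr} have a unique positive integer solution up to permutation, and then argue arithmetically. Parts (a) and (c) are correct and take genuinely different routes. For (a) the paper just invokes Theorem~\ref{Kpiqi}, whereas your AM--GM argument ($x+y+z\ge 3\sqrt[3]{xyz}$ with equality iff $x=y=z$) is self-contained and cleaner. For (c) both you and the paper reduce to $x=kp$ with $k\le 2$ by the same size bound (your ``$m(t-1)=q+r-y$'' should read $q+r-y-z$, and when $p=q$ one only has $q+r\le 2m-1$ rather than $2(m-1)$, but neither slip affects the conclusion $t\le 2$); in the case $t=2$ the paper runs a chain of inequalities ($q\ge y+z\ge 2z$, hence $y\ge r$, hence $q+r>q+r$), while you show the quadratic $T^2-(q+r-m)T+qr/2$ has negative discriminant, using $0\le q+r-m\le\min\{q,r\}$ (and the case $q+r-m<0$ dies immediately since $y+z>0$). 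That is a valid and arguably tidier finish.

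Part (b) is where you have a real, though honestly flagged, gap: you never actually prove that a sorted triple of powers of the prime $a$ is determined by the pair (sum of the powers, sum of the exponents); you list candidate strategies and concede each has boundary trouble at $a=2$ or at repeated exponents. The lemma you need is true, and your instinct to keep the exponent-sum (i.e.\ product) condition in play is essential: $4+1+1=2+2+2$ shows that the sum of three powers of $2$ alone does \emph{not} determine the exponents, so the paper's own one-line justification (``there is a unique way to write $p+q+r$ as a sum of powers of the prime $a$'') is too glib as stated, and any correct proof must use \eqref{pqr} as well as \eqref{p+q+r}. A concrete way to close your argument: dividing by $a^{\min}$, assume the smallest of the six exponents is $0$, say $w=0$; reducing $a^u+a^v+1=a^\alpha+a^\beta+a^\gamma$ modulo $a$ forces $\gamma=0$ unless $a\in\{2,3\}$, and in those two residual cases one is driven to triples such as $(u,0,0)$ versus $(u-1,u-1,1)$ or $(2,0,0)$ versus $(1,1,1)$, which are excluded precisely by the exponent-sum condition. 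One is then left with $a^u+a^v=a^\alpha+a^\beta$, which yields $\{u,v\}=\{\alpha,\beta\}$ by comparing leading terms ($2a^\alpha<a^{\alpha+1}$ for $a\ge 3$) or by uniqueness of binary expansion for $a=2$. Until some such case analysis is written out, part (b) is not proved.
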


\begin{proof}
Part (a) is a special case of Theorem \ref{Kpiqi}. Part (b) holds since in this case if $K_{x,y,z}$ has
the same Seidel spectrum as $K_{p,q,r}$, then $xyz=pqr$ implies that each of $x$, $y$, $z$ is a power of $a$.
Since there is a unique way to write $p+q+r$ as a sum of powers of the prime $a$, the triple $x,y,z$ is
equal to $p,q,r$.

We now prove part (c). Suppose $p\ge q\ge r$ and $p$ is prime.
If $xyz=pqr$, then $p$ divides one of $x,y,z$, say $x$. If $x=kp$, $k\ge 2$, then
\[3p\ge p+q+r=x+y+z\ge kp+2\]
implies that $k=2$. But then
\[qr=2yz ~\text{  and }~ q+r=p+y+z,\]
and thus $q\ge q+r-p= y+z\ge 2z$. This in turn implies
\[2yz=qr\ge 2zr,\]
hence $y\ge r$. We get that
\[p+y\ge q+r =p+y+z>p+y,\]
a contradiction. Thus $x=p$, and therefore the triple $p,q,r$ and the triple $x,y,z$ are identical.
\end{proof}

We now consider whether slightly weaker conditions may suffice for $K_{p,q,r}$ to be $S$-determined.
Does equality of exactly two elements in the triple $p,q,r$ suffice? Does a prime in the triple, but
not the largest, suffice? Do two primes suffice? In general, the answer to each of these questions is
negative:

\begin{example}\label{prrppr}
{\rm For every positive integer $k$ ($k$ and $2k-1$ not necessarily prime) the graphs
$K_{k(2k-1), k(2k-1), 1}$ and  $K_{(2k-1)^2, k, k}$ are Seidel cospectral, and for $k>1$ they are non-isomorphic.
Thus for every $r>1$ there exists $p>r$ such that $K_{p,r,r}$ is not $S$-determined, and
for every $r\ge 1$ there exist infinitely many non-prime $p$'s such that $K_{p,p,r}$ is not $S$-determined.}
\end{example}

To point out some cases where $K_{p,p,r}$, $p>r$, is $S$-determined, we first prove an auxiliary result.

\begin{lemma}\label{p=q>r}
Let $p>r$ and let $K_{x,y,z}$ be Seidel cospectral with $K_{p,p,r}$, where $x\ge y\ge z$ and $x,y,z\notin\{p,r\}$. Then
\[x>p>y\ge z>r \text{ and } x+y<2p.\]
Let $K_{x,y,z}$ be Seidel cospectral with $K_{p,r,r}$, where $x\ge y\ge z$ and $x,y,z\notin\{p,r\}$. Then
\[p>x\ge y>r>z \text{ and } x+y<2p.\]
\end{lemma}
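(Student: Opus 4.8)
The plan is to use the two defining equations for Seidel cospectrality of complete tripartite graphs, namely $x+y+z = 2p+r$ and $xyz = p^2 r$ (resp. $x+y+z = p+2r$ and $xyz = pr^2$), together with the hypothesis $x,y,z \notin \{p,r\}$, and extract the claimed strict inequalities by elementary estimates on symmetric functions. I will treat the $K_{p,p,r}$ case first; the $K_{p,r,r}$ case is symmetric in spirit (indeed, one can pass from one to the other by a substitution, but writing it out directly is just as short).

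First I would establish the outer bounds $x > p$ and $z > r$. For $z > r$: since $xyz = p^2 r$ and $x,y,z \le x$, if $z \le r$ then, combined with $x,y \le x$ and the sum constraint, one gets a contradiction with $xyz = p^2r$; more cleanly, one shows $z < r$ forces $xy > p^2$ while $x+y = 2p+r-z > 2p$, and then the pair $(x,y)$ with sum exceeding $2p$ and product exceeding $p^2$ is consistent, so this needs a sharper argument — I would instead argue via the ordering: $x \ge p$ is forced because if $x < p$ then $x,y,z$ are all $< p$ (as $x$ is the max) hence $xyz < p^3$; but we need $xyz = p^2 r$, which is compatible, so again pure size bounds do not immediately close it. The right tool is Lemma \ref{spectra3}: the positive Seidel eigenvalues of $K_{x,y,z}$ interlace $2x-1, 2y-1, 2z-1$ from one side and those of $K_{p,p,r}$ equal (by Lemma \ref{spectra3}(4) and the multiplicity-two pattern) a value in $(2r-1, 2p-1)$ and the value $2p-1$ with multiplicity one among the nontrivial eigenvalues. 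Matching the eigenvalue $\lambda_1 = 2p-1$ of $K_{p,p,r}$ against $2x-1 \ge \lambda_1 \ge 2y-1$ gives $x \ge p \ge y$, and since $x \ne p \ne y$ by hypothesis, $x > p > y$. Matching $\lambda_2 \in (2r-1,2p-1)$ against $2y - 1 \ge \lambda_2 \ge 2z-1$ gives $y \ge z > r$ after again invoking $z \ne r$ (and $y>r$ follows from $y \ge z > r$). So part one of the conclusion, $x > p > y \ge z > r$, comes straight out of the interlacing lemma.

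It remains to prove $x + y < 2p$. Here I would use the product equation $xyz = p^2 r$ together with $z > r$, which gives $xy = p^2 r / z < p^2$. Now I have $xy < p^2$ and I want $x + y < 2p$; this does not follow from $xy < p^2$ alone (e.g. $x$ large, $y$ small), so I must also use the sum equation $x + y = 2p + r - z$. Combined with $z > r$ this already gives $x + y < 2p$ immediately — that is the whole point. So in fact the inequality $x+y<2p$ is equivalent to $z > r$ via the sum equation, and both drop out together. For the $K_{p,r,r}$ case the analogous equations are $x+y+z = p+2r$, $xyz = pr^2$; interlacing against the eigenvalue pattern of $K_{p,r,r}$ (now $\lambda_1 \in (2r-1, 2p-1)$ with $2p-1$ appearing once and $2r-1$ appearing in the bulk) yields $p > x \ge y > r > z$ after discarding the excluded values, and then $x + y = p + 2r - z > p + 2r - r = p + r$; to get the stronger $x+y < 2p$ one uses $xy = pr^2/z$ with $z$ small — wait, $z$ small makes $xy$ large — so instead use $x < p$ and $y > r$ is not enough either; the correct route is $x+y = p+2r-z$ and, since $x \ge y > r$ forces... actually $x + y < 2p \iff z > p + 2r - 2p = 2r - p$, which holds trivially if $p \ge 2r$ and otherwise needs the product constraint.

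The main obstacle I anticipate is precisely this last point: showing $x+y < 2p$ in the $K_{p,r,r}$ case (and confirming it cleanly in the $K_{p,p,r}$ case) when $p$ is not much larger than $r$. I expect the fix is to combine the sum and product equations to bound the smallest variable away from the relevant threshold — e.g. writing $z = (p+2r) - (x+y)$ and $xy = pr^2/z$, substituting to get a relation $x+y$ and $xy$ must satisfy, and then noting that a real pair $(x,y)$ with $x \ge y > r$ exists only when the discriminant $(x+y)^2 - 4xy \ge 0$, which forces $x+y$ to lie in an interval whose right endpoint I can show is $< 2p$ using $p > r$. This discriminant argument, carried out carefully, should close both cases; I would write it once and note the symmetry. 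The interlacing inputs from Lemma \ref{spectra3} do all the work for the chain of strict inequalities, so no new ideas are needed there.
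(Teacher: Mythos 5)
The interlacing step that gives $x>p>y$ in the first case is correct and is exactly what the paper does. But your pivotal claim that interlacing also yields $z>r$ is wrong, and this is where the real content of the lemma lies. Lemma \ref{spectra3}(2) applied to $K_{x,y,z}$ gives $2y-1\ge\lambda_2\ge 2z-1$, i.e.\ an \emph{upper} bound $2z-1\le\lambda_2$ on $z$; combining this with $\lambda_2>2r-1$ says nothing about $z$ versus $r$ (a priori $z$ could be $1$). Since, as you yourself observe, $x+y<2p$ is equivalent to $z>r$ via the sum equation, your argument for the first case has a hole at precisely the inequality the lemma is about. The same problem recurs in the second case: interlacing gives $2x-1\ge\lambda_1$, a \emph{lower} bound on $x$, so it cannot deliver $p>x$; your description of the spectrum of $K_{p,r,r}$ is also off ($2p-1$ is not an eigenvalue there; rather $\lambda_1\in(2r-1,2p-1)$ strictly and $\lambda_2=2r-1$, so interlacing gives $p>y>r>z$ but leaves $x$ unbounded above). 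Once $p>x\ge y$ is known, $x+y<2p$ is immediate, so the difficulty you perceive at that point is misplaced; the missing step is $x<p$ itself. Your closing assertion that ``the interlacing inputs do all the work for the chain of strict inequalities'' is therefore not correct.

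Your proposed discriminant fix can in fact be pushed through, but not in the form sketched. For the first case one must show $z(2p+r-z)^2<4p^2r$ for all $0<z<r$, which follows from the factorization $z(2p+r-z)^2-4p^2r=(z-r)\left(z^2-(4p+r)z+4p^2\right)$ together with the fact that both roots of the quadratic factor exceed $r$ exactly when $p>r$. For the second case a sign-of-discriminant argument alone does not suffice: for $z$ in a subinterval of $(0,r)$ real pairs $(x,y)$ do exist, and one must additionally locate both roots of $t^2-(p+2r-z)t+pr^2/z$ to the left of $p$, using that this quadratic evaluates at $t=p$ to $p(z-r)^2/z>0$ plus a vertex estimate. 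None of this is carried out in the proposal. The paper avoids all of it with a short algebraic computation: writing $x=p+a$, $y=p-b$ with $a,b\ge 1$, the two cospectrality equations combine to $(a-b)p(z-p)=abz$; since the right-hand side is positive and $z<p$, necessarily $a<b$, whence $x+y<2p$ and then $z=2p+r-(x+y)>r$, with a similar computation for $K_{p,r,r}$. You should either adopt that computation or complete the discriminant analysis in full.
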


\begin{proof}
let $\lambda_1\ge \lambda_2$ be the two positive Seidel eigenvalue  of $K_{p,p,r}$. By Lemma \ref{spectra3}, since $p>r$, \[2p-1\ge \lambda_1\ge 2p-1> \lambda_2> 2r-1.\]
Thus $\lambda_1=2p-1$, and $\lambda_2>2r-1$. By Lemma   \ref{spectra3},
\[2x-1\ge 2p-1\ge 2y-1\ge \lambda_2\ge 2z-1.\]
Since $2x-1, 2y-1\ne 2p-1$ the first two inequalities are strict, and hence $x>p>y$.
So $x=p+a$, $y=p-b$, with $a$ and $b$ positive integers.
As
\[p+a+p-b+z=2p+r,\]
we get that $r=z+a-b$. Thus from $xyz=p^2r$ we get
\[(p+a)(p-b)z=p^2(z+a-b),\]
implying that
\[(a-b)pz-abz=(a-b)p^2,\]
or
\[(a-b)p(z-p)=abz.\]
As in the last equality the right hand side is positive, and $z\le y<p$, we must have $a<b$, and
\[x+y=p+a+p-b<2p.\]
This in turn implies that
\[z=2p+r-(x+y)>r.\]
The proof of the second claim in the lemma is similar.
\end{proof}

\begin{theorem}\label{ababa}
Let $a$ and $b$ be  different  primes. Then $K_{ab,ab,a}$ is $S$-determined if and only if the ordered pair $(a,b)\ne (2,3)$.
\end{theorem}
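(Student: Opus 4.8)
The plan is to convert the statement, via Observation~\ref{dif3part}, into a purely Diophantine question and then attack it with Lemma~\ref{p=q>r} and elementary divisibility. Set $p=q=ab$ and $r=a$; since $b\ge 2$ we have $ab>a$, so by Observation~\ref{dif3part} the graph $K_{ab,ab,a}$ is $S$-determined precisely when the only positive-integer solution, up to order, of
\[x+y+z=2ab+a \qquad\text{and}\qquad xyz=a^{3}b^{2}\]
is $\{x,y,z\}=\{ab,ab,a\}$. Accordingly I would take any Seidel-cospectral $K_{x,y,z}$ with $x\ge y\ge z$ and $\{x,y,z\}\ne\{ab,ab,a\}$; by the first of the observations listed after Example~\ref{661} this forces $x,y,z\notin\{ab,a\}$, so Lemma~\ref{p=q>r} applies (with $p=ab$, $r=a$) and gives $x>ab>y\ge z>a$.

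The heart of the proof is then number-theoretic. Because $xyz=a^{3}b^{2}$, each of $x,y,z$ divides $a^{3}b^{2}$ and hence equals $a^{i}b^{j}$ with $0\le i\le 3$, $0\le j\le 2$, the $i$-exponents summing to $3$ and the $j$-exponents to $2$. I would split according to how many of $y,z$ are divisible by $b$. If neither is, then $x=a^{i_1}b^{2}$ while $y,z$ are powers of $a$, and $a<z\le y$ forces the $a$-exponents of $y$ and $z$ to be at least $2$, impossible since the three $a$-exponents sum to $3$. If exactly one of $y,z$ is divisible by $b$, then the bound $y<ab$ (resp.\ $z<ab$) forces that number to equal $b$, and then $x>ab$, $z>a$, and the exponent sum $3$ again yield a contradiction. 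The only surviving configuration is $b\mid y$ and $b\mid z$, which (using $y,z<ab$) forces $y=z=b$ and hence $x=a^{3}$; the sum equation becomes $a^{3}+2b=2ab+a$, i.e.\ $b=a(a+1)/2$ after dividing by $a-1\ne 0$. A parity check shows $a(a+1)/2$ is composite for every $a\ge 3$ and equals $6$ for $a=3$, so among distinct primes this can happen only for $(a,b)=(2,3)$. Thus for $(a,b)\ne(2,3)$ there is no nontrivial solution and $K_{ab,ab,a}$ is $S$-determined; for $(a,b)=(2,3)$ the triple $\{8,3,3\}$ is a second solution, so $K_{6,6,2}$ and $K_{8,3,3}$ are Seidel cospectral and, by Theorem~\ref{difkpart}, not switching equivalent, whence $K_{6,6,2}$ is not $S$-determined.

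I expect the main obstacle to be organizational rather than conceptual: one must lay out the divisor case analysis so that it is visibly exhaustive, and the whole argument rests on the strict inequality chain $x>ab>y\ge z>a$ provided by Lemma~\ref{p=q>r}, whose hypotheses $x,y,z\notin\{ab,a\}$ are in turn guaranteed by the observation after Example~\ref{661}. It is also worth noting that the argument is insensitive to whether $a<b$ or $a>b$; in the regime $a>b$ several subcases collapse immediately, since $b\mid y$ would force $y=b<a<z$, contradicting $z\le y$.
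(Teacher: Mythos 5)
Your proposal is correct and follows essentially the same route as the paper: both reduce the problem via Observation~\ref{dif3part} and Lemma~\ref{p=q>r} to the system $x+y+z=2ab+a$, $xyz=a^3b^2$ with $x>ab>y\ge z>a$, settle it by elementary divisibility (the paper by first showing $z$ must be prime and splitting on $b<a$ versus $b>a$, you by splitting on which of $y,z$ are divisible by $b$), and arrive at the same final equation $a(a+1)=2b$. One small imprecision: in your ``exactly one of $y,z$ divisible by $b$'' case, the bound $y<ab$ only forces the $a$-exponent of that number to be $0$, so it could a priori be $b^2$ (when $b<a$) rather than $b$; this subcase is killed by the same exponent-sum count (the remaining two numbers would be pure powers of $a$ exceeding $a$ and $ab$ respectively, forcing $a$-exponents summing to at least $4>3$), but it should be stated explicitly for the case analysis to be visibly exhaustive.
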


\begin{proof}
For the only if note that the tripartite graph $K_{6,6,2}$ is Seidel cospectral with $K_{8,3,3}$.

Now suppose $(a,b)\ne (2,3)$.   By by Lemma \ref{p=q>r} combined with  \eqref{p+q+r} and \eqref{pqr}, the graph $K_{x,y,z}$, $x\ge y\ge z$ and $x,y,z\notin\{ab,a\}$,
is Seidel cospectral with $K_{ab,ab,a}$ if and only if
the following three conditions are satisfied:
\begin{align}
x+y+z&=2ab+a    \label{2ab+a}\\
xyz&=a^3b^2   \label{a3b2}\\
x>ab&>y\ge z>a  \label{x>ab>..z>a}
\end{align}
By \eqref{a3b2} $z$ cannot be a product of at least two primes, since otherwise $xy$ is a product of at most three primes, which cannot occur in combination with $x>y\ge z$.
Hence $z$ is a prime.
In the case that $b<a$,  $K_{ab,ab,a}$ is $S$-determined since in this case there is no prime $z>a$ which divides $a^3b^2$.
If $b>a$, then $z=b$. Then $xy=a^3b$, $x>y\ge b$, $x,y\notin\{ab,a\}$, holds
only if $x=a^3$, $y=z=b$.
By \eqref{2ab+a}, $a^3+2b=2ab+a$ , so $a^3-a=2b(a-1)$ and therefore $a(a+1)=2b$. But this last equality is satisfied only if $a=2$ and $b=3$, contrary to our
assumption. Therefore $K_{ab,ab,a}$ is $S$-determined in this case also.
\end{proof}

In the next result, we consider $K_{p,p,1}$, where $p$ is a product of two different primes.

\begin{theorem}\label{abab1}
If $p=ab$, where $a>b$ are both prime, and $K_{p,p,1}$ is not $S$-determined, then
$a=2b-1$ and the only complete tripartite graph Seidel cospectral with $K_{p,p,1}$ is $K_{a^2,b,b}$.
\end{theorem}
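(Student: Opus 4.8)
The plan is to translate ``not $S$-determined'' into the existence of an extra integer triple via Observation~\ref{dif3part}, use Lemma~\ref{p=q>r} to force that triple to have a very large entry, and then finish with a short divisibility and number-theoretic argument. Throughout, recall (from the discussion following Observation~\ref{dif3part}) that a triple yielding a complete tripartite graph Seidel cospectral with but not switching-equivalent to $K_{p,q,r}$ must be disjoint from $\{p,q,r\}$.

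\emph{Step 1: produce and normalise the extra triple.} Assume $K_{p,p,1}$ is not $S$-determined. By Observation~\ref{dif3part} there is a triple $x\ge y\ge z\ge 1$ with $\{x,y,z\}\ne\{p,p,1\}$ and $x+y+z=2ab+1$, $xyz=(ab)^2=a^2b^2$. By the remark just quoted, $\{x,y,z\}$ is disjoint from $\{p,1\}=\{ab,1\}$, so $z\ge 2$ and none of $x,y,z$ equals $ab$. These are exactly the hypotheses of the first statement of Lemma~\ref{p=q>r} applied with $(p,p,r)=(ab,ab,1)$ (note $ab>1$), which yields $x>ab>y\ge z>1$; we will only need $x>ab$.

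\emph{Step 2: pin down the triple and extract $a=2b-1$.} Since $x\mid a^2b^2$, $x>ab$, and $a>b$ are distinct primes (so $b^2<ab$), the only divisors of $a^2b^2$ exceeding $ab$ are $a^2,\,a^2b,\,ab^2,\,a^2b^2$, whence $yz=a^2b^2/x\in\{b^2,b,a,1\}$ respectively. But $y\ge z\ge 2$, so $yz$ is neither $1$ nor a prime; the only surviving case is $x=a^2$ and $yz=b^2$, forcing $\{y,z\}=\{b,b\}$. Hence the only complete tripartite graph that can be Seidel cospectral with $K_{p,p,1}$, other than $K_{p,p,1}$ itself, is $K_{a^2,b,b}$. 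Imposing the sum condition $a^2+2b=2ab+1$ gives $a^2-1=2b(a-1)$, i.e.\ $(a-1)(a+1)=2b(a-1)$; since $a$ is prime we have $a-1\ge 1$, so cancelling gives $a+1=2b$, that is $a=2b-1$. (The converse also holds: if $a=2b-1$ then $a^2\cdot b\cdot b=a^2b^2=p^2$ and $a^2+2b=2ab+1$, so $K_{a^2,b,b}$ is Seidel cospectral with $K_{p,p,1}$ and, as $a^2=(2b-1)^2>(2b-1)b=p$, not isomorphic to it; thus $K_{p,p,1}$ is indeed then not $S$-determined.)

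The bulk of this is bookkeeping that is guaranteed to go through once Lemma~\ref{p=q>r} is invoked, so the only place needing care is the endgame: reducing the factorisations of $a^2b^2$ to the single triple $(a^2,b,b)$, and then correctly cancelling the factor $a-1$ in the sum equation (which is legitimate because $a$, being prime, is at least $2$). One could also sidestep Lemma~\ref{p=q>r} altogether by observing directly from $xyz=a^2b^2$ (all factors $>1$ and $\ne ab$) that $\{x,y,z\}$ is either $\{a^2,b,b\}$ or $\{b^2,a,a\}$, and then discarding the second option via the sum equation, which would force $b+1=2a$ — impossible since $a>b$.
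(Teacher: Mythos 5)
Your proof is correct and follows essentially the same route as the paper's: invoke Lemma~\ref{p=q>r} to force $x>ab$, enumerate the divisors of $a^2b^2$ to pin the triple down to $(a^2,b,b)$, and extract $a=2b-1$ from the sum equation. The only (harmless) difference is that you discard $x=b^2$ immediately via $b^2<ab$, whereas the paper rules it out by running the sum equation to the impossible conclusion $b+1=2a$; your explicit justification of the hypotheses of Lemma~\ref{p=q>r} via the disjointness remark is a welcome bit of added care.
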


\begin{proof}
By the assumption and Lemma \ref{p=q>r}, there exist $x>p>y\ge z>1$ such that
$K_{x,y,z}$ is Seidel cospectral with $K_{p,p,1}$. Then
$xyz=a^2b^2$. By Lemma \ref{p=q>r}, $x>ab$, and as $y,z\notin\{ab,1\}$, $x\notin \{a^2b^2\,,\, a^2b\, ,\, ab^2\}$. Thus either $x=b^2$
or $x=a^2$. Suppose $x=b^2$. Then $yz=a^2$ and, since $y,z\ne 1$, necessarily $y=z=a$. By \eqref{p+q+r} we get
that
\[b^2+2a=2ab+1,\]
and therefore
\[b^2-1=2a(b-1),\]
implying that $b+1=2a$. But this is impossible by the assumption that $a>b(>1)$.
By the same computation, in the remaining case, that $x=b^2$, necessarily $y=z=a$ and we get that $a+1=2b$.
The graphs $K_{ab,ab,1}$ and $K_{a^2,b,b}$ do have the same Seidel spectrum when $a=2b-1$ (see also Example \ref{prrppr}).
\end{proof}

In the next result, we consider the general case that one element in the triple is $1$.

\begin{theorem}\label{pq1}
$K_{p,q,1}$ is $S$-determined if $q\le 4$. For every $q>4$ there exists a positive integer $p$ such that $K_{p,q,1}$ is
not $S$-determined.
\end{theorem}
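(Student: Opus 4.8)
The plan is to split the statement into its two halves. For the positive direction, suppose $K_{x,y,z}$ is Seidel cospectral with $K_{p,q,1}$ and not switching-equivalent to it; by Observation \ref{dif3part} together with the second bullet after Example \ref{661}, I may assume $\{x,y,z\}\cap\{p,q,1\}=\emptyset$, so in particular $z\ge 2$. Equations \eqref{p+q+r} and \eqref{pqr} give $x+y+z=p+q+1$ and $xyz=pq$. Since $z\mid pq$ and $z\ge 2$, and $pq=xyz$ with $x\ge y\ge z\ge 2$, the product $pq$ has at least three prime factors counted with multiplicity, which already forces $q$ to be composite when $q\le 4$; the only composite value $\le 4$ is $q=4$. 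So for $q\in\{1,2,3\}$ the graph $K_{p,q,1}$ is $S$-determined essentially for free (one can also invoke Theorem \ref{casesdetermined}(c) when $q$ is prime, handling $q=2,3$, and $q=1$ is the complete bipartite case). It remains to rule out the case $q=4$: here $xyz=4p$ and $x+y+z=p+5$. I would argue as in the proof of Theorem \ref{casesdetermined}(c): from $x\ge y\ge z\ge 2$ and the factorization $xyz=4p$, enumerate the possibilities for how the factor $4$ and the prime structure of $p$ distribute among $x,y,z$, using the sum constraint $x+y+z=p+5$ to derive a contradiction in each case (the inequality $x\le p+1$ forces $x$, if it is a multiple of $p$, to equal $p$ — excluded — so $p$ must divide one of $x,y,z$ with cofactor at most $4$, and a short case analysis closes it).

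For the negative direction — for every $q>4$ there is a $p$ with $K_{p,q,1}$ not $S$-determined — the task is to exhibit, given $q>4$, a triple $(x,y,z)\ne(p,q,1)$ and an integer $p$ satisfying $x+y+z=p+q+1$ and $xyz=pq$. The natural construction generalizes Example \ref{prrppr} and Theorem \ref{abab1}: I would look for a solution of the shape $K_{p,q,1}$ Seidel cospectral with $K_{x,y,z}$ where two of $x,y,z$ are forced small. A clean family: set $z=1$ is not allowed, so instead aim for the "reverse" pairing analogous to $K_{k(2k-1),k(2k-1),1}\sim K_{(2k-1)^2,k,k}$. Concretely, mimic Example \ref{prrppr} with the roles adjusted: try $p = $ something built from $q$. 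Writing $q\ge 5$, consider $K_{p,q,1}$ versus $K_{x,q,z}$ — but by the first bullet after Example \ref{661}, sharing the element $q$ forces the triples equal, so that is useless; the new triple must be disjoint. So I would instead seek $x,y,z$ all different from $p,q,1$. Guided by the primitive-triple result of \cite{Schinzel1996}: for any $q$ I want two disjoint triples with equal sum and product, one of which is $(p,q,1)$. Given $q$, pick any factorization $q=ab$ with $1<b\le a<q$ (possible once $q$ is composite) — but $q$ may be prime! When $q$ is prime and $>4$, $(p,q,1)$ has $pq$ with the single "large" prime $q$; a disjoint triple $(x,y,z)$ with $xyz=pq$ must have $q\mid x$ (say), so $x=qt$ with $t\mid p$, $t\ge 2$, and $t\le (p+q+1)/q<2$ forces... this would make the construction impossible for prime $q$, contradicting the claim. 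So the correct reading must allow the new triple to be non-disjoint only via the trivial coordinate: re-examining, the first bullet's argument assumed the shared element equals one of $x,y,z$; it does forbid $q$ from recurring. Hence for prime $q$ the construction must put the matching prime $q$ as a factor of $p$ in the new triple, i.e. $p$ itself is chosen divisible by $q$. Taking $p = q\cdot m$ for suitable $m$, I then need $x,y,z$ with $xyz=q^2 m$ and $x+y+z=qm+q+1$; choosing $x=q^2$, and $y,z$ with $yz=m$, $y+z=qm+q+1-q^2$, i.e. $y,z$ are roots of $T^2-(qm-q^2+q+1)T+m$. So I must pick $m$ making this quadratic have positive integer roots with $y,z\ne q,1$ and $y,z\ne qm=p$ — a congruence/size bookkeeping that works for all sufficiently structured $m$, and the "for every $q>4$ there exists $p$" quantifier gives me the freedom to choose $m$ as large and as arithmetically convenient as needed.

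The main obstacle is the negative direction: producing a single working $p$ for every $q>4$ uniformly, and in particular handling prime $q$, where the matching prime is forced into $p$ and the resulting quadratic $T^2-(qm-q^2+q+1)T+m=0$ must be arranged to split over $\Z$ with all side conditions ($x>p>y\ge z>1$ from Lemma \ref{p=q>r} for the $K_{p,p,r}$-type shapes, or the analogous bounds here) respected. I expect the cleanest route is to not insist on a uniform formula but to invoke \cite{Schinzel1996} or an explicit parametric family (e.g. a two-parameter identity in $q$ and an auxiliary $k$) to guarantee existence, then verify the two defining equations \eqref{p+q+r}, \eqref{pqr} and primitivity/disjointness by direct substitution. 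The positive direction ($q\le 4$), by contrast, is routine: it reduces to the $q=4$ case and a finite divisor analysis exactly parallel to the proof of Theorem \ref{casesdetermined}(c).
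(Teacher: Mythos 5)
Both halves of your proposal have genuine gaps. In the positive direction, the step ``$xyz=pq$ with $x\ge y\ge z\ge 2$ forces $q$ to be composite when $q\le 4$'' is a non sequitur: it only shows $pq$ has at least three prime factors counted with multiplicity, which constrains nothing about $q$ (take $q=3$, $p=8$). Your fallbacks also fail: Theorem~\ref{casesdetermined}(c) applies when the \emph{largest} part is prime, and in $K_{p,q,1}$ the largest part is in general $p$, not $q$; and $K_{p,1,1}$ is a complete \emph{tripartite} graph, not ``the complete bipartite case.'' The $q=4$ analysis is not actually carried out, and the assertion that ``$p$ must divide one of $x,y,z$'' presumes $p$ prime. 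The missing idea, which handles all $q\le 4$ at once, is to multiply $x+y+z=p+q+1$ by $q$ and substitute $pq=xyz$, obtaining $q(y+z-q-1)=x(yz-q)$; since $yz\ge y+z$ (as $y,z\ge2$) and $yz\ge 4\ge q$ with $yz=q$ impossible, this gives $x\le q\,\frac{yz-q-1}{yz-q}<q\le 4$, so $x\le 3$ is prime --- contradicting Theorem~\ref{casesdetermined}(c) applied to $K_{x,y,z}$, whose maximum \emph{is} $x$.

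In the negative direction you never produce a construction. The claim that for prime $q$ one must have $q\mid p$ is false, and the resulting ansatz $p=qm$, $x=q^2$ is provably empty: one needs $y,z\ge 2$ with $yz=m$ and $y+z=qm-q^2+q+1$, but $y+z\le \frac{m}{2}+2$ forces $m\le q-1$ for $q\ge 3$, whence $y+z\le q(q-1)-q^2+q+1=1$, which is impossible. Citing \cite{Schinzel1996} does not close this either, since that result gives no control over producing a triple of the prescribed shape $(p,q,1)$ for a \emph{given} $q$. The paper instead writes down explicit mates: for odd $q>4$, the graphs $K_{(q-1)^2/2,\,q,\,1}$ and $K_{q(q-1)/2,\,(q-1)/2,\,2}$ are Seidel cospectral (note $q\nmid p$ here, against your forced-divisibility claim), and for $q=ab$ with $a>1$, $b>2$ (which covers every even $q>4$) one takes $p=(b-1)(ab-a-b+2)$ with mate $\bigl(b(ab-a-b+2),\,b-1,\,a\bigr)$; both are checked by direct substitution into \eqref{p+q+r} and \eqref{pqr}.
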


\begin{proof}
If
\begin{align}
x+y+z&=p+q+1   \label{p+q+1} \\
xyz&=pq  ~~,\label{pq}
\end{align}
where $x\ge y\ge z$ is a triple different from $p, q, 1$, then in particular $y\ge z\ge 2$, and by
Theorem \ref{casesdetermined}(c) $x$ is not prime.
 Multiplying \eqref{p+q+1} by $q$ and substituting $pq$ by $xyz$ we get
\[q(x+y+z)=xyz+q^2+q,\]
and thus $q(y+z-q-1)=x(yz-q)$. As $y\ge z\ge 2$,  $yz\ge y+z$. Thus
\begin{equation} x(yz-q)\le q(yz-q-1).  \label{xle}\end{equation}

For the first part of the theorem, suppose on the contrary that such $x\ge y\ge z$ exist for $q\le 4$.
As $y,z\ge 2$, we have $yz\ge 4\ge q$. And $yz\ne q$, since
otherwise  in \eqref{xle} we get that $0\le -q$. Thus $yz>q$ and
\[x\le q\frac{yz-q-1}{yz-q}<q.\]
But since $q\le 4$ this means that $x$ is  prime, and we get a contradiction.

For the second part of the theorem suppose first that $q>4$ is odd, then $K_{\frac{(q-1)^2}{2},q,1}$ and $K_{q\frac{q-1}{2}, \frac{q-1}{2},2}$
have the same Seidel spectrum. This covers in particular the case that $q$ is prime.

If $q>4$ and $q=ab$, where $a>1$ and $b>2$, let $p=(b-1)(ab-a-b+2)$.
Then  $x=b(ab-a-b+2)$, $y=b-1$, $z=a$ form a triplet such that $K_{x,y,z}$ has the same
Seidel spectrum as $K_{p,q,1}$.
\end{proof}

\begin{corollary}\label{cornge 3 det}
 For any intger $n\ge 3$  there exists an $S$-determined complete tripartite graph of order $n$.
\end{corollary}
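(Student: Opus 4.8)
The plan is to avoid any new argument and instead exhibit, for each fixed $n \ge 3$, one explicit complete tripartite graph on $n$ vertices whose $S$-determinacy is already certified by a theorem proved above. The cleanest candidate is $K_{n-2,1,1}$: it has exactly $n$ vertices, and it is a genuine complete tripartite graph for every $n \ge 3$ (for $n = 3$ it is the triangle $K_3 = K_{1,1,1}$, and for $n > 3$ the part sizes $n-2 \ge 2$, $1$, $1$ are all positive, with $n-2 \ge 1$ so the ``$1$'' is indeed a smallest part).

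To finish I would simply invoke Theorem \ref{pq1}. Writing $K_{n-2,1,1}$ in the form $K_{p,q,1}$ with $p = n-2$ and $q = 1$, the hypothesis $q \le 4$ of Theorem \ref{pq1} is satisfied, so $K_{n-2,1,1}$ is $S$-determined. This produces an $S$-determined complete tripartite graph of order $n$, as required.

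I do not anticipate any obstacle: the statement is an immediate consequence of Theorem \ref{pq1}. If one wanted an argument that does not route through the family with a part of size $1$, an alternative would be to use Bertrand's postulate to pick a prime $p$ with, say, $n/2 \le p \le n-2$, take any complete tripartite graph on $n$ vertices whose largest part has size $p$ (for instance $K_{p,\,n-p-1,\,1}$), and apply Theorem \ref{casesdetermined}(c); but this is more elaborate than necessary, so I would present only the $K_{n-2,1,1}$ argument.
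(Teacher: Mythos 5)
Your proof is correct and matches the paper's intent: the corollary is stated immediately after Theorem \ref{pq1} precisely because $K_{n-2,1,1}$ (i.e., $K_{p,q,1}$ with $q=1\le 4$) is an $S$-determined complete tripartite graph on $n$ vertices for every $n\ge 3$. No gap; the alternative route via Theorem \ref{casesdetermined}(c) is, as you say, unnecessary.
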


By computer check, complete tripartite graphs on $n$ vertices, $n<13$ or $n=15$ or $n=18$ are $S$-determined.
 For $n=13,14,16,17$ there exist complete tripartite graphs which are not $S$-determined. For $n=13, K_{9,2,2}, K_{6,6,1}$ are Seidel cospectral.
 For $n=14, K_{8,3,3}, K_{6,6,2}$ are Seidel cospectral.
 For $n=16, K_{9,5,2}, K_{10,3,3}$ are Seidel cospectral.
For $n=17, K_{9,4,4}, K_{8,6,3}$ are Seidel cospectral.

%  We conclude the paper with
%\begin{con} For every $n>18$ there exists a complete tripartite graph on $n$ vertices which is not $S$-determined.
%\end{con}

\begin{remark}{\rm
A natural question is for which integers $n$ there exist complete tripartite graphs on $n$
vertices that are not $S$-determined. This question was settled in an Undergraduate Research Project at
the Technion by Ramy Masalha and Eli Bogdanov \cite{MasalhaBogdanov2018}. They showed that for $n=7k-\alpha$, $\alpha\in \{1,2,3,4,5,6,7\}$, the
following two triples have the same sum  and the same product:
\[ \alpha, 3k,4k-2\alpha ~\text{ and }~ 2\alpha, k, 6k-3\alpha.\]
For $n\notin\{22,24,30,36,42\}$ these are two different triples. For the remaining values of $n$ they found the following pairs
of triples:
\begin{align*}
n=22:&\quad 9,8,5 \quad\text{and}\quad 10,6,6\\
n=24:&\quad 12,10,2 \quad\text{and}\quad 16,5,3\\
n=30:&\quad 20,7,3 \quad\text{and}\quad 21,5,4\\
n=36:&\quad 21,13,2 \quad\text{and} \quad 26,7,3\\
n=42:&\quad 24,16,2  \quad\text{and}\quad 32,6,4
\end{align*}
Thus for every $n\ge 13$,  other than $n=15$ and $n=18$, there exists a complete tripartite graph on $n$ vertices that is
not $S$-determined.
}\end{remark}

{\bf Acknowledgements}
 The authors would like to thank the anonymous referee for suggestions and comments  on the early version of this paper.


\begin{thebibliography}{10}

\bibitem{balla2018equiangular}
I.~Balla, F.~Dr{\"a}xler, P.~Keevash, and B.~Sudakov.
\newblock Equiangular lines and spherical codes in Euclidean space.
\newblock {\em Inventiones mathematicae}, 211(1):179--212, 2018.

\bibitem{berman2018family}
A.~Berman, D.-M. Chen, Z.-B. Chen, W.-Z. Liang, and X.-D. Zhang.
\newblock A family of graphs that are determined by their normalized laplacian
  spectra.
\newblock {\em Linear Algebra and its Applications}, 548:66--76, 2018.

\bibitem{MasalhaBogdanov2018}
E.~Bogdanov and R.~Masalha.
\newblock Private communication.


\bibitem{butler2016cospectral}
S.~Butler and K.~Heysse.
\newblock A cospectral family of graphs for the normalized laplacian found by
  toggling.
\newblock {\em Linear Algebra and its Applications}, 507:499--512, 2016.

\bibitem{cvetkovic1971graphs}
D.~M. Cvetkovi{\'c}.
\newblock Graphs and their spectra.
\newblock {\em Publikacije Elektrotehni{\v{c}}kog fakulteta. Serija Matematika
  i fizika}, (354/356):1--50, 1971.

\bibitem{Delorme2012}
C.~Delorme.
\newblock Eigenvalues of complete multipartite graphs.
\newblock {\em Discrete Mathematics}, 312(17):2532--2535, 2012.

\bibitem{haemers2004enumeration}
W.~H. Haemers and E.~Spence.
\newblock Enumeration of cospectral graphs.
\newblock {\em European Journal of Combinatorics}, 25(2):199--211, 2004.

\bibitem{jin2014complete}
Y.-L. Jin and X.-D. Zhang.
\newblock Complete multipartite graphs are determined by their distance
  spectra.
\newblock {\em Linear Algebra and its Applications}, 448:285--291, 2014.

\bibitem{LvWeiZhao2012}
S.-M. Lv, L.~Wei,  and H.-X. Zhao.
\newblock On the {S}eidel integral complete multipartite graphs.
\newblock {\em Acta Mathematicae Applicatae Sinica. English Series},
  28(4):705--710, 2012.




\bibitem{Schinzel1996}
A.~Schinzel.\newblock Triples of positive integers with the same sum and the same product.
\newblock {\em Serdica. Mathematical Journal. Serdika. Matematichesko
  Spisanie}, 22(4):587--588, 1996.

\bibitem{sciriha2018two}
I.~Sciriha and L.~Collins.
\newblock Two-graphs and nssds: An algebraic approach.
\newblock {\em Discrete Applied Mathematics}, https://doi.org/10.1016/j.dam.2018.05.003, 2018.

\bibitem{szollHosi2018enumeration}
F.~Sz{\"o}ll{\H{o}}si and P.~R. {\"O}sterg{\aa}rd.
\newblock Enumeration of seidel matrices.
\newblock {\em European Journal of Combinatorics}, 69:169--184, 2018.

\bibitem{van2003graphs}
E.~R. Van~Dam and W.~H. Haemers.
\newblock Which graphs are determined by their spectrum?
\newblock {\em Linear Algebra and its applications}, 373:241--272, 2003.

\bibitem{vanDamHaemers2009}
E.~R. van Dam and W.~H. Haemers.
\newblock Developments on spectral characterizations of graphs.
\newblock {\em Discrete Mathematics}, 309(3):576--586, 2009.

\bibitem{van1991equilateral}
J.~H. van Lint and J.~J. Seidel.
\newblock Equilateral point sets in elliptic geometry.
\newblock In {\em Geometry and Combinatorics}, pages 3--16. Elsevier, 1991.

\bibitem{WangZhaoLi2014}
L.~Wang, G.~Zhao, and K.~Li.
\newblock Seidel integral complete {$r$}-partite graphs.
\newblock {\em Graphs and Combinatorics}, 30(2):479--493, 2014.

\end{thebibliography}
\end{document}